\documentclass[reqno, qed]{amsart}

\usepackage{amssymb, amsmath}
\usepackage{mathrsfs}
\usepackage{amscd}
\usepackage{verbatim}
\usepackage{enumerate}
\usepackage{color}
\usepackage[normalem]{ulem}
\usepackage{cancel}

\usepackage{hyperref}
%%%%%%%
\allowdisplaybreaks

\theoremstyle{plain}
\newtheorem{thrm}{Theorem}[section]
\theoremstyle{remark}

\newtheorem{remark}[thrm]{Remark}
\newtheorem{example}[thrm]{Example}
\theoremstyle{plain}
\newtheorem{theorem}[thrm]{Theorem}

\newtheorem{corollary}[thrm]{Corollary}
\newtheorem{lemma}[thrm]{Lemma}
\newtheorem{proposition}[thrm]{Proposition}

\newtheorem{definition}[thrm]{Definition}

\numberwithin{equation}{section}

\newcommand{\leaderfill}{\leaders\hbox to 1em{\hss.\hss}\hfill}
\newcommand{\RR}{\mathbb{R}}
\newcommand{\R}{\mathbb{R}}

\renewcommand{\P}{{\mathbb P}}
\newcommand{\EE}{\mathbb{E}}
\newcommand{\E}{\mathbb{E}}

\newcommand{\e}{\varepsilon}
\newcommand{\one}{\mathbf{1}}

\newcommand{\calL}{{\mathscr L}}
\newcommand{\OO}{\Omega}

\newcommand{\F}{{\mathscr F}}

\newcommand{\g}{\gamma}
\newcommand{\lb}{\langle}
\newcommand{\rb}{\rangle}
\newcommand{\limn}{\lim_{n\to\infty}}

\newcommand{\umd}{\textsc{umd}}

%%%%%%%%%%%%%%%%%%%%%%%%%%%%%%%%%%%%%%%%%%%%%%%%%%%%%%%%%%%%%%%%%%%%%
\begin{document}

\title[Forward integration]{Forward integration, convergence and nonadapted pointwise multipliers}

\author{Matthijs Pronk and Mark Veraar}
\address{Delft Institute of Applied Mathematics\\
Delft University of Technology \\ 2600 GA Delft\\The
Netherlands} \email{Matthijs.Pronk@inphykem.com}
\address{Delft Institute of Applied Mathematics\\
Delft University of Technology \\ 2600 GA Delft\\The
Netherlands} \email{M.C.Veraar@tudelft.nl}

\begin{abstract}
In this paper we study the forward integral of operator-valued processes with respect to a cylindrical Brownian motion. In particular, we provide conditions under which the approximating sequence of processes of the forward integral, converges to the stochastic integral process with respect to Sobolev norms of smoothness $\alpha<1/2$. This result will be used to derive a new integration by parts formula for the forward integral.
\end{abstract}

%60G05  View Publications (1973-now) Foundations of stochastic processes
%60G07  View Publications (1980-now) General theory of processes
%60H05  View Publications (1973-now) Stochastic integrals
%60B12  View Publications (1980-now) Limit theorems for vector-valued random variables (infinite-dimensional case)
%46B09  View Publications (1991-now) Probabilistic methods in Banach space theory [See also 60Bxx]
%46E35  View Publications (1973-now) Sobolev spaces and other spaces of "smooth'' functions, embedding theorems, trace theorems
%60G17  View Publications (1973-now) Sample path properties

\thanks{The first named author is supported by VICI subsidy 639.033.604
of the Netherlands Organisation for Scientific Research (NWO)}

\subjclass[2010]{60H05, 60G17, 60B12, 46B09}

\keywords{Forward integral, stochastic integral, sample path properties, non-adapted processes, UMD, type and cotype, stochastic integration by parts}

\maketitle

\section{Introduction}

In \cite{RV91} and \cite{RV93} Russo and Vallois initiated a theory of stochastic integration via regularization procedures. In later years this was further developed by them and several other authors (see \cite{KrukRuTu, RuTrut, NeNo, Tudor, EwXi, CGR, Jing}, and also the lecture notes \cite{RV-survey} and its references). The regularization procedure is connected to the celebrated forward and backward integrals which can be used to integrate with respect to more general processes than semimartingales. Applications arise for instance in the situation where the integrator is a fractional Brownian motion. Another feature is that the forward and backward integrals allow to integrate non-adapted processes.

Since the development of the Skorohod integral in \cite{Skorohod-integral}, integration of non-adapted integrands is used in the theory of SDEs (see \cite{Nualart2,NOP, Pardoux88, RV-survey} and references therein). A basic example where non-adapted integrands naturally occur is when the initial value of an SDE depends on the full paths of the underlying stochastic process (see \cite{Buckdahn91, MilletNualartSanz}). In many situations the forward integral is easier to work with than the Skorohod integral as a difficult correction term can often be avoided (see the It\^o formula in \cite{RV95}, \cite[Theorem 8.12]{NOP}).
The forward integral is used widely in the modeling of insider trading, which was introduced in \cite{BiagOks}. Since then, this has been further developed (see \cite[Chapter 8]{NOP} and its references). In particular, in \cite{NMBOP2, NMBOP1, OksZha} the authors generalized the forward integral to the setting of L\'{e}vy processes.

In the infinite dimensional setting several authors have worked on stochastic calculus for the Skorohod integral  (see \cite{janjan, MayZak1, NuPar, PV13} and references therein). However, only few results are available for the forward integral in infinite dimensions. In \cite{GirRusso}, Di Girolami and Russo  present a general set-up for an It\^o formula and covariation formulas. In \cite{NualartLeon} Le\'{o}n and Nualart have introduced the forward integral in the operator-valued setting and used it to study stochastic evolution equations in Hilbert spaces with an adapted (unbounded) drift.

In this paper we study several properties of the forward integral where the integrand is an operator-valued process and the integrator a cylindrical Wiener process.
We will prove a new approximation result for the forward integral (see Theorem \ref{thrm:frwrd-ito-coincide-p>2} and Corollary \ref{cor:type2} below). In the one-dimensional setting this result takes the following form:
\begin{theorem}\label{thm:intro}
Let $w$ be a standard Brownian motion and let $g$ be an adapted and measurable process with almost all paths in $L^p(0,T)$ with $p\geq 2$. Then
the pathwise defined process
\[t\mapsto n\int_0^t g(s) (w(s+\tfrac1n) - w(s)) \, ds, \ \ \ t\in [0,T]\]
converges to the It\^o integral process $t\mapsto \int_0^\cdot g\, d w$ in $W^{\alpha,p}(0,T)$ in probability for every $\alpha\in [0,\tfrac12)$.
\end{theorem}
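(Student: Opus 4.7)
The plan is to rewrite $I_n$ as an It\^o integral through a stochastic Fubini identity and then appeal to the Sobolev regularity of the It\^o integral as a function of its integrand. Using $w(s+1/n)-w(s) = \int_s^{s+1/n} dw(r)$ and the adaptedness of $g$, the It\^o--Fubini theorem (applied after a stopping-time localization that makes $g$ lie in $L^p(\Omega\times(0,T))$) yields
\[
I_n(t) = \int_0^{T+1/n} K_n(r,t)\, dw(r),\qquad K_n(r,t) := n\int_{(r-1/n)\vee 0}^{r\wedge t} g(s)\, ds.
\]
I would then decompose $K_n(r,t) = \bar g_n(r)\one_{[0,t]}(r) - E_n(r,t)$, where $\bar g_n(r) := n\int_{(r-1/n)\vee 0}^{r} g(s)\, ds$ is the backward moving average of $g$ and $E_n$ is a correction supported in the thin strip $\{(r,t): t<r<t+1/n\}$. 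Accordingly $I_n(t) = M_n(t) - R_n(t)$ with $M_n(t) = \int_0^t \bar g_n(r)\, dw(r)$ an It\^o integral of an adapted process and $R_n(t)$ a boundary remainder.

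The analytic engine is the inequality
\[
\EE\left\|\int_0^\cdot h\, dw\right\|_{W^{\alpha,p}(0,T)}^p \leq C_{p,\alpha,T}\,\EE\|h\|_{L^p(0,T)}^p,\qquad p\geq 2,\ \alpha\in[0,\tfrac12),
\]
valid for any adapted $h$; this follows by inserting the Burkholder--Davis--Gundy bound $\EE\bigl|\int_s^t h\, dw\bigr|^p \lesssim |t-s|^{p/2-1}\EE\int_s^t|h|^p\, dr$ into the Slobodeckij representation of $W^{\alpha,p}$, after which a Fubini calculation shows the resulting double integral converges exactly when $\alpha<\tfrac12$. By the $L^p$-version of Lebesgue's differentiation theorem, $\bar g_n\to g$ a.s.\ in $L^p(0,T)$ with $\|\bar g_n\|_{L^p}\leq\|g\|_{L^p}$. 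Localizing at $\tau_M := \inf\{t : \|g\|_{L^p(0,t)}\geq M\}\wedge T$ and applying the inequality above to $h=\bar g_n-g$, dominated convergence gives $M_n\to \int_0^\cdot g\, dw$ in $W^{\alpha,p}(0,T)$ in probability.

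The remaining and, I expect, most delicate step is to verify that the boundary remainder $R_n$ vanishes in $W^{\alpha,p}(0,T)$. Since $E_n(\cdot,t)$ is supported in a window of length $1/n$ and is bounded by $\bar G_n(r):= n\int_{(r-1/n)\vee 0}^r|g|$, BDG gives $\EE|R_n(t)|^p \lesssim n^{-p/2}\EE\,\bar G_n(t)^p$, which tends to zero in $L^p(0,T)$. The subtler point is the Slobodeckij seminorm of $R_n$: when $|t-t'|\lesssim 1/n$ the strips at $t$ and $t'$ overlap, and one must show that the $n^{-1/2}$ gain from BDG beats the $|t-t'|^{-(1+\alpha p)}$ weight. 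I would split the Slobodeckij double integral according to whether $|t-t'|\geq 1/n$ or not, applying BDG separately on each piece; the constraint $\alpha<\tfrac12$ is precisely what makes the combined bound vanish, so the whole argument lines up.
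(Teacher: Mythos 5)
Your argument is correct, and its backbone coincides with the paper's: both begin with the stochastic Fubini identity $I_n(t)=\int_0^{T+1/n} n\one_{[0,1/n]}*(\one_{[0,t]}g)\,dw$ (Proposition \ref{prop:fwrd-extends-ito}), both rest on the estimate $\E\big\|\int_0^\cdot h\,dw\big\|_{W^{\alpha,p}(0,T)}^p\lesssim \E\|h\|_{L^p(0,T)}^p$ obtained by feeding the Burkholder--Davis--Gundy inequality into the Slobodeckij seminorm (this is the scalar instance of Proposition \ref{prop:M_Holder_2} combined with the embedding of Proposition \ref{prop:embeddingtype}\eqref{it:cotypeemb}), and both localize with the same stopping times $\tau_M$. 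Where you genuinely diverge is in handling the $t$-dependence of the mollified kernel, which the paper explicitly flags as the obstruction to applying Proposition \ref{prop:M_Holder_2} directly. The paper keeps the kernel intact: it writes the increment $J^-(g,n)(t)-J^-(g,n)(s)$ as the integral of $n\one_{[0,1/n]}*(\one_{[s,t]}g)$, bounds the difference with $\one_{[s,t]}g$ by $2\|\one_{[s,t]}g\|$ using that the mollifier is an $L^2$-contraction, and concludes by dominated convergence in the Slobodeckij double integral, a weight $(t-r)^{-\beta}$ supplying the $|t-s|^{\beta}$ gain needed for integrability. You instead split off the $t$-independent moving average $\bar g_n$, to which the engine applies verbatim, at the cost of the boundary remainder $R_n$ supported in the strip $t<r<t+1/n$. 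That remainder analysis does close: in the overlap regime $|t-t'|<1/n$ the three pieces of $E_n(\cdot,t)-E_n(\cdot,t')$ (supported on $(t',t)$, $(t,t'+1/n)$ and $(t'+1/n,t+1/n)$) each contribute $O(n^{(\alpha-1/2)p})$ to the seminorm after Fubini, and the disjoint regime $|t-t'|\geq 1/n$ gives the same order, so $\alpha<\tfrac12$ is exactly the threshold you predicted. (One small imprecision: your pointwise bound $\E|R_n(t)|^p\lesssim n^{-p/2}\E\,\bar G_n(t)^p$ should read $\E|R_n(t)|^p\lesssim n^{1-p/2}\E\int_t^{t+1/n}\bar G_n(r)^p\,dr$, which integrates in $t$ to the same conclusion.) The trade-off: the paper's dominated-convergence argument is shorter and extends to \umd-valued integrands via $\gamma$-norms and the spaces $V^{\beta,p}$, whereas your decomposition is more quantitative in the scalar/type-2 setting and even produces an explicit rate $n^{(\alpha-1/2)p}$ for the remainder, which the paper's soft argument does not provide.
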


The above result will be a particular case of two more general results on forward integration in {\umd} Banach spaces.
The class of {\umd} Banach spaces was extensively studied in the work of Burkholder (see \cite{Bu3} and references therein).
The {\umd} property plays an important role in both vector-valued stochastic and harmonic analysis. Stochastic integration and calculus in Banach spaces is naturally limited to the class of {\umd} Banach space (see \cite{BNVW08,NVW1}). Applications to stochastic evolution equations have been given in \cite{NVW-evolution} and several works afterwards (see the recent survey \cite{NVWsurvey} for further references).

As an application of the operator valued version of the above convergence result, we derive a new pointwise multiplier result for the forward integral (see Section \ref{sec:nonadaptedmult}).
It can be interpreted as an integration by parts formula.
The main novelty is that we can multiply adapted It\^o integrable processes with a process $M$ which is smooth in time but not necessarily adapted. Moreover, it is allowed to have a non-integrable singularity at $t=T$.
This result will be obtained in the operator-valued setting. It is particularly interesting in the study of mild solutions of non-autonomous stochastic evolution equations with adapted drift, where indeed the multiplier has a non-integrable singularity. A well-known obstacle in non-autonomous stochastic evolution equations with adapted drift is that the stochastic convolution term is not well-defined as an It\^o integral due to adaptedness problems.
In \cite{NualartLeon} this problem has been investigated using integration by parts for the Skorohod integral.
This formula for the Skorohod integrals can be obtained in the case $M$ is constant in time and satisfies certain Malliavin differentiability.
In \cite{PV-representation} we use the integration by parts formula of Theorem \ref{thm:productforward} to give a new approach to non-autonomous stochastic evolution equations with adapted drift.

\section{Preliminaries\label{sec:prel}}

In this paper we let $H$ be a separable Hilbert space and we fix an orthonormal basis $(h_n)_{n\geq 1}$. Let $T\in (0,\infty)$ be a fixed time and $X$ a {\umd} Banach space. All vector spaces will be assumed to be defined over the real scalar field, but with minor adjustments one can also allow complex scalars. We refer to \cite{Bu3} for details on {\umd} Banach spaces. The space $(\OO,\F, \P)$ will be a probability space with filtration $(\F_t)_{t\geq 0}$ and expectation is denoted by $\E$. Moreover, we write $L^0(\OO;X)$ for the strongly measurable functions $\xi:\OO\to X$ with the topology given by convergence in probability.
In the sequel $C$ will be a constant which may vary from line to line.

\subsection{Radonifying operators}
Let $\mathscr{H}$ be a real separable Hilbert space (below we take $\mathscr{H} = L^2(S;H)$ where $H$ is another Hilbert space).
We refer to \cite[Chapter 12]{DJT} and the survey paper \cite{Neerven-Radon} for an overview on $\gamma$-radonifying operators and unexplained terminology below. The Banach space of $\gamma$-radonifying operators from $\mathscr{H}$ into $X$ will be denoted by $\g(\mathscr{H},X)$. It is a subspace of $\calL(\mathscr{H},X)$. It satisfies the left- and right-ideal property. In particular, for $R\in \g(\mathscr{H},X)$, $U\in \calL(X)$ and $T\in \calL(\mathscr{H})$, one has $U R T\in \g(\mathscr{H},X)$ and
\[\|URT\|_{\g(\mathscr{H},X)}\leq \|U\| \, \|R\|_{\g(\mathscr{H},X)} \, \|T\|.\]
A simple consequence of the right-ideal property is that every operator $T \in \calL(\mathscr{H})$ has an extension to an operator
\begin{equation}\label{eq:gamma-ext}
\begin{split}
\tilde{T}:\g(\mathscr{H},X) &\to \g(\mathscr{H},X), \\
R&\mapsto R T^*,
\end{split}
\end{equation}
and $\|\tilde{T}\| = \|T\|$.

Let $(S,\Sigma,\mu)$ be a $\sigma$-finite measure space and $H$ be a Hilbert space. A function $G:S\to \calL(H,X)$ will be called {\em $H$-strongly measurable} if for all $h\in H$, $s\mapsto G(s) h$ is strongly measurable. Moreover, for $p\in (1, \infty)$, $G$ will be called {\em weakly $L^p(S;H)$} if for all $x^*\in X^*$, $s\mapsto G(s)^* x^*$ is in $L^p(S;H)$. For $G:S\to \calL(H,X)$ which is $H$-strongly measurable and weakly $L^2(S;H)$ we define $R_G:L^2(S;H)\to X$ as the (Pettis) integral operator
\begin{equation}\label{eq:weakint}
\lb R_G f, x^*\rb  = \int_S \lb G(s) f(s), x^*\rb \, d\mu(s),  \ \ \ f\in L^2(S;H), \ \ x^*\in X^*.
\end{equation}
Note that
\begin{equation}\label{eq:gamma-estm}
\|R_Gf\|_X \leq \|R_G\|_{\g(L^2(S;H),X)} \|f\|_{L^2(S;H)}.
\end{equation}
We will say $G\in \g(S;H,X)$ if $R_G\in \g(L^2(S;H),X)$ and write $\|G\|_{\g(S;H,X)} = \|R_G\|_{\g(L^2(S;H),X)}$.
It is well-known that the step functions $G:S\to \calL(H,X)$ of finite rank are dense in $\g(S;H,X)$. We will write $\g(0,T;X)$ for $\g((0,T);\RR,X)$.

For many operators $T:L^2(S;H)\to L^2(S;H)$ one has the property that $\tilde{T} R_G = R_F$ for a certain function $F$. In this case it will be convenient to write $T G = F$.

An easy consequence of the definitions and the ideal property is that \[\|G\one_{S_0}\|_{\g(S;H,X)} = \|G|_{S_0}\|_{\g(S_0;H,X)}.\] We will also use the following property.
\begin{example}
For $G\in \g(S;H,X)$ and $b\in L^\infty(S)$ one has $b G \in \g(S;H,X)$ and
\begin{equation}\label{eq:Linftyproduct}
\|b G\|_{\g(S;H,X)}\leq \|b\|_{L^\infty(S)} \|G\|_{\g(S;H,X)}.
\end{equation}
This is immediate from the right-ideal property with operator $T_b:L^2(S;H)\to L^2(S;H)$ given by $T_b f= b f$.
\end{example}

Finally we recall that in the special case that $X$ is a Hilbert space, one has
\begin{equation}\label{eq:HSid}
\gamma(S;H,X) = L^2(S;\mathcal{C}^2(H,X)),
\end{equation}
where $\mathcal{C}^2(H,X)$ denotes the space of Hilbert-Schmidt operators.

\begin{lemma}[$\gamma$-Integration by parts]\label{lem:intbyparts}
Let $M\in W^{1,1}(0,T;\calL(X))$.
Then for every $f\in \g(0,T;X)$ one has $M f\in \g(0,T;X)$ and for all $0\leq a<b\leq T$,
\begin{align}\label{eq:identityintbyparts}
\int_a^b M(s) f(s) \, ds = M(a) F(a) + \int_a^b M'(s) F(s) \, ds,
\end{align}
where $F(t) = \int_t^b f(s) \, ds$.
\end{lemma}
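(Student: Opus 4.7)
The plan is to first give a kernel representation of the candidate $Mf$ that makes it manifestly an element of $\g(0,T;X)$, and then to reduce the identity \eqref{eq:identityintbyparts} to standard Bochner calculus by approximating $f$ with $X$-valued step functions. Starting from the absolute-continuity representation $M(s)=M(0)+\int_0^s M'(r)\,dr$ (valid because $W^{1,1}(0,T;\calL(X))\hookrightarrow C([0,T];\calL(X))$), I would split $Mf=M(0)f+(M-M(0))f$. The first term lies in $\g(0,T;X)$ with norm at most $\|M(0)\|\,\|f\|_\g$ by the left-ideal property. For the second, set $H_r(s):=M'(r)f(s)\one_{(r,T)}(s)$; the left-ideal property together with \eqref{eq:Linftyproduct} gives $\|H_r\|_{\g(0,T;X)}\leq \|M'(r)\|_{\calL(X)}\,\|f\|_{\g(0,T;X)}$, and $r\mapsto H_r$ is strongly measurable into $\g(0,T;X)$. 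Hence $r\mapsto H_r$ is Bochner integrable into $\g(0,T;X)$ with $L^1$ majorant $\|M'(\cdot)\|\,\|f\|_\g$, and a Fubini interchange of the outer $dr$-integral with the Pettis formula \eqref{eq:weakint} identifies the Bochner integral $\int_0^T H_r\,dr$ with the pointwise function $s\mapsto(M(s)-M(0))f(s)$. This establishes $Mf\in\g(0,T;X)$ together with the quantitative bound
\[
\|Mf\|_{\g(0,T;X)}\leq \bigl(\|M(0)\|_{\calL(X)}+\|M'\|_{L^1(0,T;\calL(X))}\bigr)\|f\|_{\g(0,T;X)}.
\]

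For \eqref{eq:identityintbyparts} I would first verify the formula when $f=\sum_j x_j\one_{(c_j,d_j)}$ is an $X$-valued finite-rank step function. In that case $F(t)=\int_t^b f(s)\,ds$ is absolutely continuous in $X$ with $F'=-f$ a.e., so $s\mapsto M(s)F(s)$ belongs to $W^{1,1}([a,b];X)$ with derivative $M'(s)F(s)-M(s)f(s)$; the fundamental theorem of calculus combined with $F(b)=0$ yields \eqref{eq:identityintbyparts} as a Bochner equality. To extend to general $f\in\g(0,T;X)$, take step functions $f_n\to f$ in $\g$: the bound from the previous paragraph applied to $f_n-f$ gives $Mf_n\to Mf$ in $\g$, whence
\[
\int_a^b M(s)f_n(s)\,ds=R_{Mf_n}(\one_{[a,b]})\to R_{Mf}(\one_{[a,b]})=\int_a^b M(s)f(s)\,ds
\]
in $X$. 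Moreover, $F_n\to F$ uniformly on $[a,b]$ since $\|F_n(s)-F(s)\|_X\leq \|f_n-f\|_{\g(0,T;X)}\sqrt{T}$ by \eqref{eq:gamma-estm}, so $M(a)F_n(a)\to M(a)F(a)$ in $X$, and by Bochner dominated convergence with the $L^1$ majorant $\|M'(\cdot)\|\sup_n\|F_n\|_\infty$ also $\int_a^b M'(s)F_n(s)\,ds\to\int_a^b M'(s)F(s)\,ds$. Sending $n\to\infty$ in the step-function identity completes the proof.

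The main technical obstacle is the Fubini/Pettis identification in the first paragraph: carefully interchanging the outer Bochner integral valued in $\g(0,T;X)$ with the inner Pettis integral defining $R_g$. The required integrability and strong measurability are supplied by $\|H_r\|_\g\leq\|M'(r)\|\,\|f\|_\g$ together with $M'\in L^1(0,T;\calL(X))$, so no auxiliary input such as $\g$-boundedness of the range of $M$ or a general operator-valued multiplier theorem on $\g(0,T;X)$ is required.
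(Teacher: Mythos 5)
Your argument is correct, and the second half (verifying \eqref{eq:identityintbyparts} for step functions via the fundamental theorem of calculus, then passing to the limit using the multiplier bound, the uniform estimate $\|F_n(t)-F(t)\|\leq T^{1/2}\|f_n-f\|_{\g(0,T;X)}$ from \eqref{eq:gamma-estm}, and dominated convergence) is essentially identical to the paper's density argument. Where you genuinely diverge is in proving $Mf\in\g(0,T;X)$: the paper quotes \cite[Example]{KunstmanWeis} for the $R$-boundedness of $\{M(t):t\in[0,T]\}$ and then invokes the Kalton--Weis $\gamma$-multiplier theorem \cite[Theorem 5.2]{Neerven-Radon}, whereas you unfold that machinery into a direct computation: writing $M(s)=M(0)+\int_0^s M'(r)\,dr$, representing $(M-M(0))f$ as the $\g(0,T;X)$-valued Bochner integral of $H_r=M'(r)f\one_{(r,T)}$ (whose $\g$-norm is controlled by the left ideal property and \eqref{eq:Linftyproduct}), and identifying the result with the pointwise product via a weak Fubini argument against \eqref{eq:weakint}. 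This is in fact the standard proof of the $R$-boundedness of $W^{1,1}$-ranges made explicit, so the two routes are close in spirit; but yours buys two small things. First, it yields the explicit constant $\|M(0)\|+\|M'\|_{L^1(0,T;\calL(X))}$ rather than an unspecified $C$. Second, it sidesteps the usual caveat in the Kalton--Weis theorem that the multiplied operator a priori lands in the almost-summing class unless $X$ contains no copy of $c_0$ (a point the paper does address elsewhere, e.g.\ in the proof of Proposition \ref{prop:embeddingtype}\eqref{it:typeemb}, but glosses over here, relying implicitly on $X$ being {\umd}); a Bochner integral of $\g$-valued functions lies in $\g$ automatically because $\g(L^2(0,T),X)$ is complete. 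The only point you should make fully explicit is the strong measurability of $r\mapsto H_r$ into $\g(0,T;X)$: this follows by writing $R_{H_r}=M'(r)\circ R_f\circ T_r$ with $T_r\phi=\one_{(r,T)}\phi$, noting that $r\mapsto R_fT_r$ is continuous into $\g$ (since $\|R_f(T_r-T_{r'})\|_{\g}=\|f\|_{\g(r',r;X)}\to 0$) and that $M'$ is Bochner measurable, so the composition under the continuous bilinear map $\calL(X)\times\g\to\g$ is strongly measurable and essentially separably valued. With that detail supplied, the proof is complete.
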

\begin{proof}
By \cite[Example]{KunstmanWeis} the family $\{M(t):t\in [0,T]\}$  is $R$-bounded by $C$. Therefore, by the Kalton--Weis $\gamma$-multiplier theorem (see \cite[Theorem 5.2]{Neerven-Radon}), one has that $M f\in \gamma(0,T;X)$ again and $\|M f\|_{\g(0,T;X)}\leq C \|f\|_{\g(0,T;X)}$. One also has
\[\|F(t)\|\leq \|f\|_{\g(0,T;X)} \|\one_{(t,b)}\|_{L^2(0,T)} \leq T^{1/2} \|f\|_{\g(0,T;X)}\]
and hence
\begin{align*}
\int_0^T \|M'(t) F(t)\| \, dt & \leq \|M\|_{W^{1,1}(0,T;\calL(X))} \sup_{t\in [0,T]}\|F(t)\|
\\ & \leq \|M\|_{W^{1,1}(0,T;\calL(X))} T^{1/2} \|f\|_{\g(0,T;X)}.
\end{align*}

For step functions $f:(0,T)\to X$, the identity \eqref{eq:identityintbyparts} is easy to verify. Now the general case follows from the above estimates and a density argument.
\end{proof}

\subsection{Integration with respect to a cylindrical Brownian motion}
Let $\mathscr{H} = L^2(0,T;H)$, where $H$ is a separable real Hilbert space. For details on stochastic integration in {\umd} Banach space we refer to \cite{NVW1,NVWsurvey}. The operator $W:\mathscr{H}\to L^2(\OO)$ will be called a {\em cylindrical Brownian motion} if for all choices $h\in \mathscr{H}$, $W h$ is a centered Gaussian random variable and for $h, \tilde{h}\in {\mathscr{H}}$, $\E(W h W \tilde{h}) = [h,\tilde{h}]$, where $[\cdot,\cdot]$ denotes the inner product on $\mathscr{H}$.

A process $G:(0,T)\times\OO\to \calL(H,X)$ will be called $H$-strongly adapted if for all $t\in (0,T)$ and $h\in H$, $\omega\mapsto G(t,\omega) h$ is strongly $\F_t$-measurable. If $G$ is $H$-strongly measurable and adapted, then from the separability of $H$ and \cite[Theorem 0.1]{OndrSe}, one can derive that $G$ has a version which is {\em $H$-strongly progressively measurable}, i.e. for each $h\in H$, $(t,\omega)\mapsto G(t,\omega) h$ is strongly progressively measurable. This will be used below without further notice.

Recall from \cite{CoxVer2,NVW1,NVWsurvey} that if $X$ is a {\umd} space and $G$ is an adapted process in $L^0(\OO;\gamma(0,T;H,X))$, one can define a stochastic integral
$I(G) = \int_0^T G\, d W$ in a natural way. We also let $J(G)(t) = \int_0^t G \, d W$, and recall that $J(G)$ has a version with continuous paths.
Moreover, for all $p\in (0,\infty)$ the following two-sided estimate for the stochastic integral holds:
\begin{align}\label{eq:Itoisom}
C^{-1}\|G\|_{L^p(\OO;\gamma(0,T;H,X))}\leq  \|J(G)\|_{L^p(\OO;C([0,T];X))} \leq C \|G\|_{L^p(\OO;\gamma(0,T;H,X))}.
\end{align}

\begin{remark}
All results below hold under the slightly weaker assumption that only the right-hand side of \eqref{eq:Itoisom} holds. This includes spaces such as $X = L^1$. For details on such spaces we refer to \cite{CoxVer2,CoxGei}.
\end{remark}

\subsection{Function spaces}

For $\alpha\in (0,1)$, $p\in [1, \infty)$ and $a<b$, recall that a function $f:(a,b)\to X$ is said to be in the {\em Sobolev space} $W^{\alpha,p}(a,b;X)$  if $f\in L^p(a,b;X)$ and
\[[f]_{W^{\alpha,p}(a,b;X)} := \Big(\int_a^b \int_a^b \frac{\|f(t) - f(s)\|^p}{|t-s|^{\alpha p + 1}} \, ds \, dt \Big)^{1/p} <\infty.\]
Letting $ \|f\|_{W^{\alpha,p}(a,b;X)} = \|f\|_{L^p(a,b;X)} + [f]_{W^{\alpha,p}(a,b;X)}$, this space becomes a Banach space. A function $f:(a,b)\to X$ is said to be in the {\em H\"older space} $C^{\alpha}(a,b;X)$ if
\[[f]_{C^{\alpha}(a,b;X)} = \sup_{a<s<t<b} \frac{\|f(t) - f(s)\|}{|t-s|^\alpha}<\infty.\]
Letting $\displaystyle \|f\|_{C^{\alpha}(a,b;X)} = \sup_{t\in (0,T)}\|f(t)\|_X + [f]_{W^{\alpha,p}(a,b;X)}$, this space becomes a Banach space. Moreover, every $f\in C^{\alpha}(a,b;X)$ has a unique extension to a continuous function $f:[a,b]\to X$.

If $0<\alpha<\beta<1$, then one easily checks
\begin{equation}\label{eq:fractionalSobolevembtrivial}
C^{\alpha}(a,b;X) \hookrightarrow W^{\alpha,p}(a,b;X).
\end{equation}
One of the main results in the theory of fractional Sobolev spaces is the following Sobolev embedding: if $\alpha>\frac1p$, then
\begin{equation}\label{eq:fractionalSobolevemb}
W^{\alpha,p}(a,b;X) \hookrightarrow C^{\alpha-\frac{1}{p}}(a,b;X).
\end{equation}
Here the embedding means that each $f\in W^{\alpha,p}(a,b;X)$ has a version which is continuous
and this function lies in $C^{\alpha-\frac{1}{p}}(a,b;X)$. The embedding \eqref{eq:fractionalSobolevemb} can be found in the
literature in the scalar setting and the standard proofs extend to the vector-valued setting. We refer to \cite[14.28 and 14.40]{Leoni09} and \cite[Theorem 8.2]{fracsob} for detailed proofs.

\section{Forward integral\label{sec:Forward}}
Recall that $H$ is a separable real Hilbert space with orthonormal basis $(h_n)_{n\geq 1}$. Let $P_n$ be the projection onto the first $n$ basis coordinates.

\begin{definition}\label{defn:fwrd-int}
Let $G: [0,T] \times \OO \to \calL(H,X)$ be $H$-strongly measurable and weakly in $L^2(0,T;H)$.
Define the sequence $(I^-(G,n))_{n=1}^\infty$ by
\begin{align*}
I^-(G,n) = \sum_{k=1}^n n\int_0^T G(s) h_k (W(s+1/n)h_k - W(s)h_k)\;ds,
\end{align*}
where the integral is defined as in \eqref{eq:weakint}.

The process $G$ is called {\em forward integrable} if $(I^-(G,n))_{n\geq 1}$ converges in probability.
In that case, the limit is called {\em the forward integral} of $G$ and is denoted by
\[I^{-}(G) = \int_0^T G\;d^-W = \int_0^T G(s)\;d^-W(s).\]
\end{definition}
Note that the above definition does not require any adaptedness properties of $G$. Unfortunately, it is unclear whether $I^{-}$ is a closable operator. For the Skorohod integral this is indeed the case (see \cite[Section 1.3]{Nualart2}).

We write $J^{-}(G,n)$ for the process given by
\begin{align}\label{eq:J-def}
J^{-}(G,n)(t) = I^{-}(G\one_{[0,t]},n).
\end{align}
Then $J^-(G,n) \in L^0(\OO;C^{1/2}(0,T;X))$. Indeed, by \eqref{eq:gamma-estm} we have a.s. for all $s<t$,
\begin{align*}
\|J^-(G,n)&(t) - J^-(G,n)(s)\| \leq \sum_{k=1}^n n \Big\| \int_s^t G(r)h_k (W(r+1/n)h_k - W(r)h_k)\;dr \Big\| \\
&\leq \sum_{k=1}^n n \|Gh_k\|_{\g(0,T;X)} \|r\mapsto \one_{[s,t]}(r)(W(r+1/n)h_k - W(r)h_k)\|_{L^2(0,T)}\\
&\leq 2(t-s)^{1/2} \sum_{k=1}^n n \|Gh_k\|_{\g(0,T;X)} \sup_{r\in [0,T+1/n]} |W(r)h_k|,
\end{align*}
and hence the result follows.

If for every $t\in [0,T]$, $(J^{-}(G,n)(t))_{n\geq 1}$ converges in probability, we write $J^{-}(G)$ for the process given by $J^{-}(G)(t) = \int_0^t G\, d^-W$. In general it seems to be unclear whether $\one_{[0,t]}G$ is forward integrable when $G$ is forward integrable.

First we show that the forward integral extends the It\^{o} integral in {\umd} spaces.
\begin{proposition}\label{prop:fwrd-extends-ito}
Consider an adapted process $G:[0,T]\times\OO \to \calL(H,X)$ that belongs to $L^0(\Omega;\gamma(0,T;H,X))$.
\begin{enumerate}[{\rm (1)}]
\item\label{it:Gn} For every $n\geq 1$, the process $G_n:= n\one_{[0,\frac1n]} * (\one_{[0,T]}P_n G)$ is adapted and in $L^0(\Omega;\gamma(0,T;H,X))$ and the following identity holds
\begin{align}\label{eq:identityIn}
I^{-}(G,n) = \int_0^\infty G_n \;dW = \int_0^{T+\frac1n} G_n \;dW.
\end{align}
\item\label{it:Gnforw} For every $t\in [0,T]$, $\one_{[0,t]}G$ is forward integrable and stochastically integrable and
\[J^{-}(G)(t) = \int_0^t G \;dW.\]
\end{enumerate}
\end{proposition}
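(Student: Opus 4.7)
My plan is to handle the two parts in order, with part (1) providing the formula that reduces part (2) to a convergence question for $G_n$ in $\gamma$.

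For part (1), I would first rewrite $G_n$ explicitly as
\[
G_n(s)=n\int_{(s-1/n)\vee 0}^{s\wedge T}P_n G(r)\,dr,
\]
from which adaptedness is immediate (the Bochner integral only uses values $G(r)$ with $r\le s$). For the $\gamma$-bound I would factor $G_n = C_n(\one_{[0,T]}P_n G)$, where $C_n f := n\one_{[0,1/n]}*f$ is a bounded operator on $L^2(\RR;H)$ with $\|C_n\|\le 1$ by Young's inequality, and $P_n$ lifts to a norm-one multiplier on $L^2(\RR;H)$. Two applications of the right-ideal extension \eqref{eq:gamma-ext} give pathwise $\|G_n\|_{\gamma(0,T+1/n;H,X)}\le \|G\|_{\gamma(0,T;H,X)}$, so $G_n\in L^0(\Omega;\gamma(0,T+1/n;H,X))$. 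Since $G_n\equiv 0$ outside $[0,T+1/n]$, the two integrals in \eqref{eq:identityIn} are equal, and the identification with $I^-(G,n)$ comes from a one-dimensional stochastic Fubini applied coordinatewise: for each $k\le n$,
\begin{align*}
\int_0^{T+1/n}G_n(s)h_k\,dW(s)h_k
&= n\int_0^T G(r)h_k\int_r^{r+1/n}dW(s)h_k\,dr\\
&= n\int_0^T G(r)h_k\bigl(W(r+1/n)h_k-W(r)h_k\bigr)dr,
\end{align*}
and summing over $k=1,\dots,n$ recovers $I^-(G,n)$.

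For part (2), I would apply part (1) with $G$ replaced by $\one_{[0,t]}G$ (which is again $H$-strongly adapted and in $L^0(\Omega;\gamma)$ by the ideal property) to obtain
\[
J^{-}(G,n)(t)=I^{-}(\one_{[0,t]}G,n)=\int_0^{\infty} G_n^{(t)}\,dW,\qquad G_n^{(t)}:=C_n(\one_{[0,t]}P_n G).
\]
By the It\^o isomorphism \eqref{eq:Itoisom}, the desired conclusion $J^{-}(G,n)(t)\to \int_0^t G\,dW$ in probability follows once I show $G_n^{(t)}\to \one_{[0,t]}G$ in $L^0(\Omega;\gamma(0,T+1;H,X))$. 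This is a pathwise statement, and I would prove it by density: finite-rank step functions are dense in $\gamma(0,T;H,X)$, and by the same extension argument used in part (1) the maps $g\mapsto C_n(\one_{[0,t]}P_n g)$ are uniformly bounded on $\gamma$. An $\e/3$-argument then reduces the claim to step functions of finite rank, on which $P_n$ eventually acts as the identity and $C_n(\one_{[0,t]}g)$ differs from $\one_{[0,t]}g$ only on finitely many intervals of total length $O(1/n)$, so the $\gamma$-difference tends to zero.

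The main obstacle I anticipate is the $\gamma$-convergence in part (2) and the justification of stochastic Fubini in part (1). For the convergence, an alternative to the direct density argument is a $\gamma$-dominated convergence statement: if a uniformly bounded net of operators $T_n$ on $L^2$ converges strongly to the identity, then the extensions $\tilde T_n$ from \eqref{eq:gamma-ext} converge strongly to the identity on $\gamma(L^2,X)$; taking $T_n$ to be the composition of $P_n$-multiplication, truncation by $\one_{[0,t]}$, and $C_n$ gives exactly what is needed. Stochastic Fubini in the UMD setting is more delicate than in the Hilbert case, but here it is applied coordinatewise against a real Brownian motion $Wh_k$ with integrand valued in $X$, and the pathwise $\gamma$-bound from part (1) together with the uniform boundedness of $s\mapsto W(\cdot)h_k$ on $[0,T+1/n]$ supplies the integrability required.
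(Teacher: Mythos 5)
Your proposal follows essentially the same route as the paper: the $\gamma$-bound for $G_n$ via the contraction $f\mapsto n\one_{[0,1/n]}*P_nf$ and the right-ideal extension, stochastic Fubini for the identity \eqref{eq:identityIn} (the paper pairs with $x^*$ first and finishes with Hahn--Banach rather than arguing coordinatewise, an inessential difference), and for part (2) the pathwise convergence $G_n\to G$ in $\gamma(\RR_+;H,X)$ combined with continuity of the stochastic integral (the paper cites \cite[Proposition 2.4 and Theorem 5.5]{NVW1} for the two steps you sketch by density and localization). The argument is correct; only take care to fix an $H$-strongly progressively measurable version of $G$ so that adaptedness of $G_n$ is rigorous, and note that passing to convergence in probability of the integrals uses the localized form of \eqref{eq:Itoisom}.
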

Motivated by the above result, we will write $J(G)$ for $J^{-}(G)$ in the adapted case. Recall that $J(G)$ always has a continuous version and we will use this version without further notice. It is unclear to us whether $J^{-}(G,n)\to J(G)$ in $L^0(\OO;C([0,T];X))$ for all $G\in L^0(\Omega;\gamma(0,T;H,X))$. In the literature there are several attempts to prove such a result in the setting $H = X =\R$, but we could not follow these arguments.
In Theorem \ref{thrm:frwrd-ito-coincide-p>2} we will give sufficient conditions on $G$ for convergence in $L^0(\OO;W^{\alpha,p}([0,T];X))$ and in particular in $L^0(\OO;C^{\alpha-\frac1p}([0,T];X))$.

\begin{proof}
Choose an $H$-strongly progressively measurable version of $G$ and extend $G$ as zero on $(T,\infty)$.
Let the operator $S_n$ on $L^2(\RR_+;H)$ be given by $S_n f = n\one_{[0,\frac1n]} * P_n f$. Then $\|S_n\|\leq 1$ and it extends by \eqref{eq:gamma-ext} to a contraction on $\gamma(L^2(0,T;H),X)$. By duality and \eqref{eq:weakint}, this extension equals $R_{G_n}$. Hence $G_n$ is in $L^0(\OO;\gamma(\RR_+;H,X))$ and for every $t\in \RR_+$ and $x^*\in X^*$ one has
\[G_n(t)^*x^* = \int_{0}^{t} n \one_{[0,\frac1n]}(t-s) P_n G(s)^*x^*  \, ds\]
and since $G$ is progressively measurable, the latter is $\F_t$-measurable and thus $G_n$ is $H$-strongly adapted. It follows that $G_n$ is stochastically integrable and
by the stochastic Fubini theorem we obtain that for all $x^*\in X^*$,
\begin{align*}
\Big\lb\int_0^{T+\frac1n} G_n \;dW, x^*\Big\rb & = \int_0^\infty G_n^*x^* \;dW
\\ &= n\int_{0}^{\infty} \int_{0}^{T} \one_{[0,\frac1n]}(\sigma-s) P_n G(s)^*x^* \;ds\;dW(\sigma)
\\ & =  n\int_0^T \int_0^\infty \one_{[0,\frac1n]}(\sigma-s) P_n G(s)^*x^* \;dW(\sigma)\;ds
\\ & = n\sum_{k=1}^n \int_0^T \lb G(s)h_k, x^*\rb (W(s+1/n)-W(s))h_k\;ds
\\ & = \lb I^{-}(G,n), x^*\rb.
\end{align*}
By the Hahn-Banach theorem this yields \eqref{it:Gn}.

Next we prove \eqref{it:Gnforw}. Replacing $G$ by $\one_{[0,t]}G$  it suffices to consider $t=T$. Note that by \cite[Proposition 2.4]{NVW1}  $G_n\to G$ in $\gamma(\RR_+;H,X)$ pointwise on $\Omega$. Therefore, with \cite[Theorem 5.5]{NVW1}
we find that $I^{-}(G,n) = \int_0^\infty G_n\, d W\to \int_0^\infty G\, d W$ in $L^0(\Omega;X)$ and \eqref{it:Gnforw} follows.
\end{proof}

In the following lemma we collect some elementary properties of the forward integral.
\begin{lemma}\label{lemma:fwrd-int-props}
Let $X_0, X_1$ be Banach spaces and let $F, G: [0,T] \times \OO \to \calL(H,X_0)$ be forward integrable processes.
\begin{enumerate}[\rm (1)]
  \item For $\alpha,\beta\in \RR$, $\alpha F+\beta G$ is forward integrable, and a.s.
  \[ \int_0^T \alpha F + \beta G \;d^-W = \alpha\int_0^T F \;d^-W + \beta \int_0^T G \;d^-W.\]
  \item If $A: \OO \to \calL(X_0,X_1)$ is such that for every $x\in X_0$, $Ax$ is $\F$-measurable, then $A G$ is forward integrable and a.s.
  \[A \int_0^T G \;d^-W = \int_0^T A G \; d^-W.\]
  In particular, for any $x^* \in X^*_0$, $G^*x^*$ is forward integrable, and a.s.
  \[ \Big\lb \int_0^T G \;d^-W, x^* \Big\rb = \int_0^T G^*x^* \; d^-W.\]
  \item\label{eq:Hille} If $(A,D(A))$ is a closed linear operator on $X_0$ such that $G\in D(A)$ a.e., $A G$ is weakly in $L^2(0,T;H)$, $H$-strongly measurable and forward integrable, then  $\int_0^T G \;d^-W$ is in $D(A)$ and a.s.
  \[A \int_0^T G \;d^-W = \int_0^T A G\; d^-W.\]
\end{enumerate}
\end{lemma}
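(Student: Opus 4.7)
The overall plan is to push every assertion down to the approximating sums $I^{-}(G,n)$, each of which is a \emph{finite} Bochner/Pettis integral of a single $\omega$-slice. At that level the statements are just pathwise manipulations (linearity of integrals; pulling bounded or closed operators through sums and integrals); the forward integrals are then recovered by passing to limits in probability, using the appropriate continuity/closedness property of the operator in play.

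For part (1), the identity
\[
I^{-}(\alpha F + \beta G, n) = \alpha\, I^{-}(F,n) + \beta\, I^{-}(G,n)
\]
is immediate from Definition \ref{defn:fwrd-int} because each summand is a Bochner integral of a scalar-weighted sum. Both sides converge in probability by assumption, so linearity of the limit gives the stated identity almost surely.

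For part (2), the proof splits into a measurability check and a limiting argument. First I would verify that $AG$ is $H$-strongly measurable and weakly in $L^{2}(0,T;H)$: for fixed $\omega$, $A(\omega)\in \calL(X_0,X_1)$ is bounded, so $s\mapsto A(\omega)G(s,\omega)h$ is measurable and square-integrable in the required weak sense. Consequently $I^{-}(AG,n)$ is well defined, and pulling the bounded operator $A(\omega)$ inside the finite sum and the Bochner integral yields the pathwise identity
\[
A\, I^{-}(G,n) = I^{-}(AG,n) \qquad \text{a.s.}
\]
Since $I^{-}(G,n) \to I^{-}(G)$ in probability, a subsequence converges almost surely; continuity of $A(\omega)$ on $X_0$ then gives $A\,I^{-}(G,n_k) \to A\,I^{-}(G)$ a.s.\ along that subsequence. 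Hence $I^{-}(AG,n)$ converges in probability to $A\,I^{-}(G)$, which is precisely the assertion that $AG$ is forward integrable with integral $A\,I^{-}(G)$. The scalar case $A=x^{\ast}\in \calL(X_0,\R)$, viewed as a (deterministic, hence $\F$-measurable) operator, is a direct specialization.

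For part (3), I would mimic part (2) but replace boundedness of $A$ by Hille's theorem for closed operators and Bochner integrals: if $f$ and $Af$ are both Bochner integrable, then $\int f \in D(A)$ and $A\int f = \int Af$. Applied pathwise to each summand of $I^{-}(G,n)$, this gives
\[
A\, I^{-}(G,n) = I^{-}(AG,n) \qquad \text{a.s.,}
\]
provided $AG$ is $H$-strongly measurable and weakly in $L^{2}$ (this is exactly the standing hypothesis). Now pass to the limit: $I^{-}(G,n) \to I^{-}(G)$ and $I^{-}(AG,n) \to I^{-}(AG)$ in probability, so along a common a.s.\ convergent subsequence we have $I^{-}(G,n_k)(\omega) \to I^{-}(G)(\omega)$ and $A\,I^{-}(G,n_k)(\omega) \to I^{-}(AG)(\omega)$ simultaneously. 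Closedness of $A$ then forces $I^{-}(G)(\omega) \in D(A)$ with $A\,I^{-}(G)(\omega) = I^{-}(AG)(\omega)$ a.s.

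The only mildly delicate step is the measurability/integrability verification for $AG$ in part (2): nothing in the hypothesis provides joint $(\omega,s)$-measurability of $A(\omega)G(s,\omega)h$, but one does not need it, since Definition \ref{defn:fwrd-int} works $\omega$-by-$\omega$ and only requires $H$-strong measurability in $s$ for each fixed $\omega$, which is automatic from boundedness of $A(\omega)$. Everything else is routine pathwise algebra combined with the observation that convergence in probability passes through continuous (or closed, via subsequences) maps.
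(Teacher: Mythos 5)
Your proposal is correct and follows essentially the same route as the paper: parts (1) and (2) are handled as direct consequences of the definition, and part (3) is proved exactly as in the paper by applying Hille's theorem pathwise to each summand to get $A\,I^-(G,n)=I^-(AG,n)$, then extracting a common a.s.\ convergent subsequence and invoking closedness of $A$. The only cosmetic difference is that you spell out the (routine) measurability checks and the subsequence argument for the bounded case, which the paper dismisses as straightforward.
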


The property \eqref{eq:Hille} is a stochastic version of Hille's theorem (see \cite[Theorem II.6]{DU}). A version for the It\^o integral can be found in \cite[Lemma 2.8]{CoxGo}.

\begin{proof}
(1) and (2) are straightforward from the definition. To prove (3), note that by Hille's theorem,
\[A\int_0^T (G(s)h_k) (W(s+\tfrac1n)-W(s))h_k\;ds = \int_0^T A(G(s)h_k) (W(s+\tfrac1n)-W(s))h_k\;ds.\]
It follows that $A(I^-(G,n)) = I^-(AG,n) \to \int_0^T A G\;d^-W $ in probability. Also, $I^-(G,n) \to \int_0^T G \;dW $ in probability. Hence one can find a set $\OO_0\in \F$ with $\P(\Omega_0) = 1$ and a subsequence $(n_k)_{k\geq 1}$ such that for all $\omega\in \OO_0$, $I^-(G,n_k)(\omega) \to \Big(\int_0^T G \;d^{-}W \Big)(\omega)$ and $A(I^-(G,n_k)(\omega)) \to \Big( \int_0^T A G \;d^-W\Big)(\omega)$. Now the result follows from the assumption that $A$ is closed.
\end{proof}

Using the forward integral it is easy to deduce local properties of the stochastic integral.
\begin{remark}\label{rmrk:fwrd-int-props}\
From Lemma \ref{lemma:fwrd-int-props} it follows that for a forward integrable process $G$ and a set $B\in \mathscr{F}$, $\one_B G$ is forward integrable and
\[ \int_0^T \one_B G \;d^-W = \one_B \int_0^t G \;d^-W.\]
In particular, if $G\in L^0(\OO;\gamma(0,T;H,X))$ is adapted and for all $x^*\in X^*$, $G^*x^* = 0$ on a set $(0,T)\times B$, then a.s.
\[ 0=\int_0^t \one_B G^*x^* \;d^-W = \Big\lb \one_B \int_0^t G \;d^-W, x^*\Big\rb, \ \ \ x^*\in X^*, t\in [0,T].\]
In particular, we deduce that $\int_0^\cdot G \;d^-W =0$ on $B$ a.s.
\end{remark}

\section{Convergence and path regularity\label{sec:conv}}

In this section we will give conditions under which for adapted $G$ one has $J^{-}(G,n)\to J(G)$ in the Sobolev norm.
Before we start we introduce a class of functions.

\begin{definition}
For $\beta\in [0,\tfrac12)$ and $p\in [1, \infty)$, let $V^{\beta,p}(0,T;H,X)$ denote the space of $H$-strongly measurable $G:(0,T)\to \calL(H,X)$ for which for almost all $t\in [0,T]$, $r\mapsto (t-r)^{-\beta} G(r)$ is in $\gamma(0,t;H,X)$ and
\[\|G\|_{V^{\beta,p}(0,T;H,X)} := \Big( \int_0^T \|r\mapsto (t-r)^{-\beta} G(r)\|_{\gamma(0,t;H,X)}^p\, dt \Big)^{1/p} <\infty.\]
\end{definition}
The spaces $V^{\beta,p}(0,T;H,X)$ were introduced in \cite{NVW-evolution} in order to study stochastic evolution equations of semilinear type in {\umd} spaces. They also play a major role in \cite{CvN} and \cite{KvN11}, where results on approximation of SPDEs have been derived. Although the spaces $V^{\alpha,p}$ look rather involved at first sight they are quite useful and not too difficult to work with. Many properties of Bochner spaces are inherited by the spaces $V^{\beta,p}(0,T;H,X)$. The main motivation for the weight inside the $\gamma$-norm is that it increases the integrability properties of $G$ without leaving the $\g$-setting.

In this paper the spaces $V^{\beta, p}(0,T;H,X)$ play an important role. They allow us to prove the main results for all {\umd} Banach spaces $X$. If the Banach space $X$ also has type $2$, then the spaces $V^{\beta, p}(0,T;H,X)$ may be replaced by $L^p(0,T;\g(H,X))$ due to Proposition \ref{prop:embeddingtype}(3). The main results in this situation are stated in Corollary \ref{cor:type2prop}, Corollary \ref{cor:type2} and Corollary \ref{cor:productforward}.

The following embedding results are straightforward from the definition and \eqref{eq:Linftyproduct}
\begin{align*}
V^{\beta,p_0}(0,T;H,X)& \hookrightarrow V^{\beta,p_1}(0,T;H,X) \ \ \text{if $1\leq p_1<p_0<\infty$},
\\ V^{\beta_0,p}(0,T;H,X)& \hookrightarrow V^{\beta_1,p}(0,T;H,X) \ \ \text{if $0\leq \beta_1<\beta_0<\frac12$}.
\end{align*}

The next proposition gives several other embedding properties for the spaces $V^{\beta, p}(0,T;H,X)$. In particular they give new insights for results in \cite{CvN}, \cite{KvN11} and \cite{NVW-evolution}. Details on (co)type properties of a Banach space can be found in \cite[Chapter 11]{DJT}. Recall that every Hilbert space has type $2$, and $X = L^q$ (or $X = W^{s,q}$) has type $2$ if and only if $q\in [2, \infty)$. Moreover, for $q<\infty$, $L^q$ has cotype $q\vee 2$.

\begin{proposition}\label{prop:embeddingtype}
Let $p\geq 1$ and $\beta\in [0,\tfrac12)$.
\begin{enumerate}[{\rm (1)}]
\item\label{it:trvialemb} If $G\in V^{\beta,1}(0,T;H,X)$, then for all $\varepsilon \in (0,T)$ one has $G\in \g(0,T-\varepsilon;H,X)$ and
\[ \|G\|_{\g(0,T-\varepsilon;H,X)} \leq \frac{T^{\beta}}{\varepsilon}  \|G\|_{V^{\beta,1}(0,T;H,X)}.\]
Moreover, if $\beta>\frac1p$, then
\[V^{\beta,p}(0,T;H,X)\hookrightarrow \g(0,T;H,X).\]
\item\label{it:typeemb}  If $X$ has cotype $p\in [2, \infty)$ and $\beta\in[0,\frac1p)$, then
\begin{align*}
\g(0,T;H,X) \hookrightarrow  V^{\beta,p}(0,T;H,X).
\end{align*}
\item\label{it:cotypeemb} If $X$ has type $2$ and $p\in [2, \infty)$, then
\begin{align*}
L^p(0,T;\gamma(H,X)) \hookrightarrow V^{\beta,p}(0,T;H,X).
\end{align*}
\end{enumerate}
\end{proposition}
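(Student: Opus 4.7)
My plan is to handle the three embeddings separately, with each part relying on the $\gamma$-right-ideal property \eqref{eq:Linftyproduct} and then on successively stronger tools (dyadic decomposition, type 2, cotype $p$).

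For (1), the first bound comes from the identity $G(r)\one_{(0,T-\varepsilon)}(r)=[(t-r)^{\beta}\one_{(0,T-\varepsilon)}(r)]\cdot (t-r)^{-\beta}G(r)$: for $t\in(T-\varepsilon,T)$, the bracketed factor is bounded by $T^\beta$, so \eqref{eq:Linftyproduct} yields the pointwise estimate $\|G\|_{\gamma(0,T-\varepsilon;H,X)}\leq T^\beta\|(t-\cdot)^{-\beta}G\|_{\gamma(0,t;H,X)}$, and integration over $t\in(T-\varepsilon,T)$ divided by $\varepsilon$ gives the first inequality. For the embedding $V^{\beta,p}\hookrightarrow\gamma$ when $\beta>1/p$, I would apply the same multiplier trick dyadically: with $T_n=T(1-2^{-n})$ and $I_n=[T_n,T_{n+1})$, for $t\in I_{n+1}$ and $r\in I_n$ one has $(t-r)^\beta\lesssim (T\cdot 2^{-n})^\beta$, so $\|G\one_{I_n}\|_{\gamma(0,T;H,X)}\lesssim T^\beta 2^{-n\beta}\|(t-\cdot)^{-\beta}G\|_{\gamma(0,t;H,X)}$ for such $t$. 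Averaging over $t\in I_{n+1}$ (length $\sim T\cdot 2^{-n}$) and applying H\"older with exponent $p$ gives $\|G\one_{I_n}\|_\gamma\lesssim T^{\beta-1/p}2^{n(1/p-\beta)}\|G\|_{V^{\beta,p}}$. Since $\beta>1/p$, $\sum_n 2^{n(1/p-\beta)}$ converges and the triangle inequality $\|G\|_\gamma\leq\sum_n\|G\one_{I_n}\|_\gamma$ closes the argument.

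For (3), the type 2 hypothesis provides the embedding $L^2(0,t;\gamma(H,X))\hookrightarrow\gamma(0,t;H,X)$ with constant depending only on the type 2 constant of $X$; applied pointwise in $t$ to $(t-\cdot)^{-\beta}G$ it gives
\[
\|(t-\cdot)^{-\beta}G\|_{\gamma(0,t;H,X)}\leq \tau\Bigl(\int_0^t (t-s)^{-2\beta}\|G(s)\|_{\gamma(H,X)}^2\,ds\Bigr)^{1/2}.
\]
Raising to the $p$-th power and integrating in $t$ shows that $\|G\|_{V^{\beta,p}}^p$ is controlled by $\|K*\phi\|_{L^{p/2}(0,T)}^{p/2}$ with $\phi(s)=\|G(s)\|_{\gamma(H,X)}^2$ and $K(u)=u^{-2\beta}\one_{(0,T)}(u)$. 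Since $\beta<1/2$ one has $K\in L^1(0,T)$, so Young's convolution inequality combined with $\|\phi\|_{L^{p/2}}=\|G\|_{L^p(0,T;\gamma(H,X))}^2$ yields the desired bound.

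For (2), the most delicate case, I would use the dyadic decomposition $(0,t)=\bigsqcup_{n\geq 0}A_n(t)$ with $A_n(t)=[t(1-2^{-n}),t(1-2^{-n-1}))$, on which $(t-\cdot)^{-\beta}\lesssim (2^{-n}t)^{-\beta}$. Combining \eqref{eq:Linftyproduct} with the triangle inequality yields $\|(t-\cdot)^{-\beta}G\|_{\gamma(0,t;H,X)}\lesssim t^{-\beta}\sum_{n\geq 0}2^{n\beta}\|G\one_{A_n(t)}\|_\gamma$. Choosing $\theta\in(\beta,1/p)$ (possible since $\beta<1/p$) and applying H\"older in the sum gives $\lesssim t^{-\beta}(\sum_n 2^{n\theta p}\|G\one_{A_n(t)}\|_\gamma^p)^{1/p}$. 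After raising to the $p$-th power, integrating in $t$, and swapping sum and integral, the task reduces to bounding $\int_0^T t^{-\beta p}\|G\one_{A_n(t)}\|_\gamma^p\,dt\lesssim 2^{-n}\|G\|_\gamma^p$. The cotype $p$ hypothesis enters through the standard disjoint-support inequality $\sum_k \|G\one_{E_k}\|_\gamma^p\leq C(X)^p\|G\|_\gamma^p$ for any countable disjoint family $(E_k)\subset(0,T)$, proved by applying cotype $p$ to the independent Gaussian pieces of $R_G W$ supported on disjoint $L^2$-subspaces. The main obstacle, and the technical heart of the proof, will be that the sets $\{A_n(t)\}_{t\in(0,T)}$ are not disjoint for fixed $n$, so before invoking the cotype estimate one must first change variables (e.g.\ $s=t(1-2^{-n-1})$) and discretize the $t$-integral using a geometric partition, producing a disjoint family on which cotype applies; this yields the desired $2^{-n}$ factor, and then $\sum_n 2^{n(\theta p-1)}$ converges thanks to $\theta<1/p$.
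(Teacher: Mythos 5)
Your proposal is correct, and parts of it diverge from the paper's proof in an interesting way. Part (3) is identical to the paper's argument (type~2 embedding $L^2(0,t;\g(H,X))\hookrightarrow\g(0,t;H,X)$ followed by Young's inequality for the convolution with $u^{-2\beta}\one_{(0,T)}\in L^1$). Part (1), first bound, is the same averaging idea in a slightly different packaging: the paper writes the exact identity $G(s)=\int_0^T (t-s)^{-\beta}G(s)\tfrac{(t-s)^{\beta}}{T-s}\one_{[s,T]}(t)\,dt$ and applies the triangle inequality plus \eqref{eq:Linftyproduct}, whereas you first get the pointwise bound for a single good $t$ and then average over $t\in(T-\varepsilon,T)$; both give the constant $T^{\beta}/\varepsilon$, and your version has the small advantage of making the qualitative membership $G\in\g(0,T-\varepsilon;H,X)$ immediate from the ideal property. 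For the case $\beta>1/p$ the paper simply takes $\varepsilon=0$ in the same identity, bounds $\tfrac{(t-s)^\beta}{T-s}\le (T-t)^{\beta-1}$ and applies H\"older in $t$ (using $(\beta-1)p'>-1$); your dyadic decomposition near $T$ with the geometric series $\sum_n 2^{n(1/p-\beta)}$ is a valid but somewhat longer substitute. The genuine divergence is in part (2): the paper identifies $\|G\|_{V^{\beta,p}}$ with $\|M_\beta G\|_{\g(0,T;H,L^p(0,T;X))}$ via the $\gamma$-Fubini isomorphism, proves $R$-boundedness of the multipliers $M_\beta(t)$ by the weak-$L^p$ criterion of Hyt\"onen--Veraar (this is where cotype $p$ and $\beta<1/p$ enter, through $\int_0^\infty T^{1/p}\wedge s^{-1/(\beta p)}\,ds<\infty$), and concludes with the Kalton--Weis multiplier theorem; you instead decompose $(0,t)$ dyadically and invoke the cotype-$p$ inequality $\sum_k\|G\one_{E_k}\|_\g^p\lesssim\|G\|_\g^p$ for disjoint supports. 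Your route is more elementary in that it avoids the $R$-boundedness machinery, at the cost of the discretization step you flag: that step does go through (partition $t$ into cells of relative length $\sim 2^{-n}$, enlarge each $A_n(t)$ to a cell-dependent interval $\tilde A_n(j)$ of length $\sim t_j2^{-n}$ with bounded overlap, split into finitely many disjoint subfamilies, and apply the cotype inequality to each; the factor $t^{1-\beta p}\le T^{1-\beta p}$ absorbs the weight), so the claim $\int_0^T t^{-\beta p}\|G\one_{A_n(t)}\|_\g^p\,dt\lesssim 2^{-n}\|G\|_\g^p$ and hence the convergence of $\sum_n2^{n(\theta p-1)}$ are justified. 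This is the one place where your write-up is a plan rather than a proof, and it should be carried out explicitly, but there is no gap in the underlying idea.
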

Under type $p$ assumptions one can show that $V^{\beta,p}(0,T;H,X)$ contains certain fractional Sobolev spaces or H\"older spaces, but we will not go into details on this (see \cite[Lemma 3.3]{NVW-evolution} and \cite[Lemma 3.8]{KvN11} for some details in this direction).

Note that for fixed $G\in V^{\beta,p}(0,T;H,X)$, the function $u\mapsto \one_{[0,u]}G$ is continuous from $[0,T]$ into $V^{\beta,p}(0,T;H,X)$ (see \cite[Section 7]{NVW-evolution}).

\begin{proof}
\eqref{it:trvialemb}:
For every $s\in [0,T)$, we can write
\[G(s) = \int_0^T (t-s)^{-\beta} G(s) \frac{(t-s)^{\beta}}{T-s} \one_{[s,T]}(t) \, dt.\]
It follows that for every $\varepsilon\in [0,T]$ one has
\begin{equation}\label{eq:estGeps}
\|G\|_{\g(0,T-\varepsilon;H,X)} \leq \int_0^T \Big\|s\mapsto (t-s)^{-\beta} G(s) \frac{(t-s)^{\beta}}{T-s} \one_{[s,T]}(t)\Big\|_{\g(0,T-\varepsilon;H,X)} \, dt.
\end{equation}
For all $\varepsilon\in (0,T)$ and $s\in [0,T-\e)$, $\tfrac{(t-s)^{\beta}}{T-s} \leq \varepsilon^{-1} T^{\beta}$, and thus by \eqref{eq:Linftyproduct}
\[\|G\|_{\g(0,T-\varepsilon;H,X)} \leq \varepsilon^{-1} T^{\beta} \|G\|_{V^{\beta,1}(0,T;H,X)}.\]

Next assume $\beta>\frac1p$ and take $\varepsilon = 0$ in \eqref{eq:estGeps}.
Note that for all $t\in [0,T)$ and $s\in [0,t]$, $\tfrac{(t-s)^{\beta}}{T-s}\leq (T-s)^{\beta-1} \leq (T-t)^{\beta-1}$. Therefore, by \eqref{eq:Linftyproduct}, and H\"older's inequality,
\begin{align*}
\|G\|_{\g(0,T;H,X)} &\leq \int_0^T \|s\mapsto (t-s)^{-\beta} G(s) \one_{[0,t]}(s)\|_{\g(0,T;H,X)} (T-t)^{\beta-1} \, dt
\\ & \leq \|G\|_{V^{\beta,p}(0,T;H,X)} \Big(\int_0^T  (T-t)^{(\beta-1)p'} \, dt\Big)^{1/p'}
\\ & \leq C \|G\|_{V^{\beta,p}(0,T;H,X)} .
\end{align*}

\eqref{it:typeemb}: \ Let $G\in \g(0,T;H,X)$. Let $\phi_t(r) = \one_{(0,t)}(r) (t-r)^{-\beta}$ and $M_{\beta}:(0,T)\to \calL(X, L^p(0,T;X))$ be given by $M_\beta(t) x = \phi_t x$. Observe that by the $\gamma$-Fubini isomorphism (see \cite[Proposition 2.6]{NVW1}) and the definition of $V^{\beta,p}$
\begin{align}\label{eq:VMG}
c^{-1}\|G\|_{V^{\beta,p}(0,T;H,X)} \leq \|M_{\beta} G\|_{\g(0,T;H,L^p(0,T;X))} \leq c \|G\|_{V^{\beta,p}(0,T;H,X)}.
\end{align}
For $\beta<\frac1p$ and $t\in (0,T)$, one has
\begin{align*}
K:&=\int_0^\infty \sup_{t\in (0,T)} \mu\big(\{r\in (0,t):\phi_t(r)>s\}\big)^{1/p} \, ds \\
 &= \int_0^\infty \sup_{t\in (0,T)} (t\wedge s^{-\frac{1}{\beta}})^{1/p}\,ds = \int_0^\infty T^{\frac1p}\wedge s^{-\frac{1}{\beta p}} \, ds <\infty.
\end{align*}
Therefore, it follows from \cite[Lemma 3.1]{HytVer} that $\{M_{\beta}(t):t\in (0,T)\}$ is $R$-bounded by $CK$, and hence by the Kalton--Weis $\gamma$-multiplier theorem (see \cite[Theorem 5.2]{Neerven-Radon}), we find that
\begin{align*}
\|M_{\beta} G\|_{\g(0,T;H,L^p(0,T;X))} \leq C K \|G\|_{\g(0,T;H,X)},
\end{align*}
where we used the fact that $L^p(0,T;X)$ does not contain a copy of $c_0$ as it has finite cotype (see \cite[page 212 and Theorem 11.12]{DJT}). Combining the latter  estimate with \eqref{eq:VMG}, the required result follows.

\eqref{it:cotypeemb}: \ From $L^2(0,T;\g(H,X))\hookrightarrow \g(0,T;H,X)$ (see \cite[Theorem 11.6]{Neerven-Radon}) and Young's inequality for convolutions we obtain
\begin{align*}
\|G\|_{V^{\beta,p}(0,T;H,X)}^p & = \int_0^T \|r\mapsto (t-r)^{-\beta} G(r)\|_{\g(0,t;H,X)}^p \, dt
\\ & = C \int_0^T  \Big(\int_0^t (t-r)^{-2\beta} \|G(r)\|_{\g(H,X)}^2\, dr\Big)^{p/2} \, dt
\\ & \leq C \Big(\int_0^T r^{-2\beta} \,dr\Big)^{p/2}  \int_0^T \|G(r)\|_{\g(H,X)}^p\, dr
\\ & = C' \|G\|_{L^p(0,T;\gamma(H,X))}^p.
\end{align*}
\end{proof}

\begin{example}\label{ex:HScase}
Let $X$ be a Hilbert space. In the case that $p=2$ and $\beta\in [0,\tfrac12)$, by \eqref{eq:HSid} and Fubini's theorem, one has
\begin{equation}\label{eq:locallyL^2}
V^{\beta,2}(0,T;H,X) = L^2((0,T),\mu_{\alpha,T} ;\mathcal{C}^2(H,X))
\end{equation}
where and $d\mu_{\alpha,T}(r) = (T-r)^{1-2\beta} \, dr$. Moreover, by Proposition \ref{prop:embeddingtype} one has
\[L^p(0,T;\gamma(H,X)) \hookrightarrow V^{\beta,p}(0,T;H,X) \ \ \ \text{for all $p\geq 2$ and $\beta\in [0,\tfrac12)$},\]
\[L^2(0,T;\mathcal{C}^2(H,X)) \hookrightarrow V^{\beta,p}(0,T;H,X) \ \ \ \text{for all $p\geq 2$ and $\beta\in [0,\tfrac1p)$},\]
\[V^{\beta,p}(0,T;H,X) \hookrightarrow L^2(0,T;\mathcal{C}^2(H,X)) \ \ \ \text{for all $p\geq 2$ and $\beta\in (\tfrac1p,\tfrac12)$}.\]
\end{example}

Next we prove a pathwise regularity result for $J(G)$. Recall that $J(G)$ is the process given by $J(G)(t) = \int_0^t G\, dW$.

\begin{proposition}\label{prop:M_Holder_2}
Let $p \in [1, \infty)$ and $0<\alpha<\beta<\frac12$. If $G$ is an adapted process that belongs to $L^0(\OO;V^{\beta,p}(0,T;H,X))$, then $J(G)\in L^0(\OO;W^{\alpha,p}(0,T; X))$. Furthermore, the following assertions hold:
\begin{enumerate}[{\rm (1)}]
\item\label{it:LpestV}
There exists a constant $C$ independent of $G$ such that
\[\|J(G)\|_{L^p(\OO;W^{\alpha,p}(0,T; X))} \leq C \|G\|_{L^p(\OO;V^{\beta,p}(0,T;H,X))}.\]
\item\label{it:L0estV} For every $n\geq 1$, assume that $G_n \in L^0(\OO;V^{\beta,p}(0,T;H,X))$ is an adapted process.
If $G_n\to G$ in $L^0(\OO;V^{\beta,p}(0,T;H,X))$, then
\[J(G_n) \to J(G) \ \ \text{in $L^0(\OO;W^{\alpha,p}(0,T; X))$}.\]
\end{enumerate}
\end{proposition}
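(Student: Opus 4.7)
The plan is to establish (1) by a direct computation using the Sobolev seminorm, and then derive (2) from (1) by a stopping-time and localization argument.

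For (1), I would expand the $W^{\alpha,p}$ seminorm of $J(G)$ as a double integral. For $s<t$ the increment is $J(G)(t)-J(G)(s)=\int_s^t G\,dW$, so the It\^o isomorphism \eqref{eq:Itoisom} gives
\[
\mathbb{E}\|J(G)(t)-J(G)(s)\|^p \leq C\,\mathbb{E}\|G\|_{\gamma(s,t;H,X)}^p.
\]
The key pointwise bound is that, by the right-ideal property \eqref{eq:Linftyproduct} applied to the scalar multiplier $r\mapsto(t-r)^{\beta}\mathbf{1}_{(s,t)}(r)$, whose $L^\infty$-norm is $(t-s)^\beta$,
\[
\|G\|_{\gamma(s,t;H,X)} \leq (t-s)^{\beta}\,\|r\mapsto(t-r)^{-\beta}G(r)\|_{\gamma(0,t;H,X)}=:(t-s)^\beta\phi(t).
\]
Inserting this into the seminorm expression and applying Fubini, the inner integral $\int_0^t (t-s)^{(\beta-\alpha)p-1}\,ds$ is finite exactly because $\beta>\alpha$, so the full expression is $\lesssim \int_0^T\mathbb{E}\phi(t)^p\,dt=\mathbb{E}\|G\|_{V^{\beta,p}}^p$. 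The analogous bound $\mathbb{E}\|J(G)(t)\|^p\leq Ct^{\beta p}\mathbb{E}\phi(t)^p$ (same multiplier trick with $s=0$) handles the $L^p(0,T;X)$ part, completing (1).

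For (2), by linearity it suffices to prove that $G_n\to 0$ in $L^0(\Omega;V^{\beta,p})$ implies $J(G_n)\to 0$ in $L^0(\Omega;W^{\alpha,p})$. I would introduce, for each $M>0$, the random time
\[
\tau_n^M=\inf\{t\in[0,T]:\|G_n\mathbf{1}_{[0,t]}\|_{V^{\beta,p}(0,T;H,X)}\geq M\}\wedge T.
\]
Since $G_n$ is adapted, splitting the defining integral over $u\leq t$ and $u>t$ shows that $t\mapsto\|G_n\mathbf{1}_{[0,t]}\|_{V^{\beta,p}}$ is adapted, non-decreasing and (by dominated convergence) left-continuous, so $\tau_n^M$ is a stopping time and $G_n^M:=G_n\mathbf{1}_{[0,\tau_n^M]}$ satisfies $\|G_n^M\|_{V^{\beta,p}}\leq M$ a.s. Applying (1) to $G_n^M$ yields $\mathbb{E}\|J(G_n^M)\|_{W^{\alpha,p}}^p\leq CM^p$. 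Standard localization of the stochastic integral gives $J(G_n)=J(G_n^M)$ on $\{\tau_n^M=T\}\supseteq\{\|G_n\|_{V^{\beta,p}}<M\}$, so for every $\delta>0$ Chebyshev yields
\[
\mathbb{P}(\|J(G_n)\|_{W^{\alpha,p}}>\delta)\leq \mathbb{P}(\|G_n\|_{V^{\beta,p}}\geq M)+C M^p\delta^{-p}.
\]
Given $\varepsilon,\delta>0$, first fix $M$ small so that $CM^p\delta^{-p}<\varepsilon/2$, then choose $n$ large so that $\mathbb{P}(\|G_n\|_{V^{\beta,p}}\geq M)<\varepsilon/2$.

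The main obstacle I anticipate is the bookkeeping in the second step: carefully verifying that the $V^{\beta,p}$-norm of the truncation $G_n\mathbf{1}_{[0,t]}$ is an adapted, left-continuous function of $t$ (so that $\tau_n^M$ really is a stopping time), and invoking the stopped-integral identity in the UMD setting of \cite{NVW1,NVWsurvey}. The estimate in (1) itself is mostly bookkeeping once the right-ideal bound $\|G\|_{\gamma(s,t;H,X)}\leq(t-s)^\beta\phi(t)$ is spotted.
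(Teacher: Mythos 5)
Your proposal is correct and follows essentially the same route as the paper: the same pointwise bound $\|G\|_{\gamma(s,t;H,X)}\le (t-s)^{\beta}\|r\mapsto(t-r)^{-\beta}G(r)\|_{\gamma(0,t;H,X)}$ combined with the It\^o isomorphism \eqref{eq:Itoisom} and Fubini for part (1), and a stopping time built from the running $V^{\beta,p}$-norm plus Chebyshev for part (2), the only cosmetic difference being that the paper fixes the threshold at $1$ and passes to an a.s.\ convergent subsequence with dominated convergence, whereas you shrink the threshold $M$. The same localization also delivers the initial claim that $J(G)\in L^0(\Omega;W^{\alpha,p}(0,T;X))$ for adapted $G\in L^0(\Omega;V^{\beta,p}(0,T;H,X))$, which the paper records as a separate intermediate step and which you should state explicitly.
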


\begin{remark}\label{rem:welldef}
Note that under the above assumptions, by Proposition \ref{prop:embeddingtype} \eqref{it:trvialemb} one has $G\one_{[0,t]}\in L^0(\OO;\gamma(0,T;H,X))$ for all $t\in [0,T)$, and therefore, $J(G)(t)$ is well-defined for every $t\in [0,T)$.
\end{remark}

\begin{remark}\label{rmrk:sblv-hldr-embdng_2}
If $\frac{1}{p}<\alpha<\frac12$, we can use the Sobolev embedding theorem \eqref{eq:fractionalSobolevemb}, to replace $W^{\alpha,p}(0,T;X)$ by $C^{\alpha-\frac1p}(0,T;X)$ in the above result.
\end{remark}

\begin{example}
Let $X$ be a Hilbert space. From Example \ref{ex:HScase}, we see that by \eqref{eq:locallyL^2} and Proposition \ref{prop:M_Holder_2}, for every
$G\in L^0(\OO;L^2((0,T),\mu_{\alpha,T};\mathcal{C}^2(H,X)))$ adapted, one has $J(G)\in L^0(\OO;W^{\alpha,p}(0,T;X))$. Note that such a process $G$ is not necessarily in $L^0(\OO;L^2(0,T;\mathcal{C}^2(H,X)))$. In the case $H = X =\R$ an example is given by $G(t) = (T-t)^{-\frac12-\varepsilon}$ with $\varepsilon>0$.

Indeed, one easily checks that $G\in L^2((0,T),\mu_{\alpha,T})$ if and only if $\varepsilon+\alpha<1/2$, and in that case $J(G)\in W^{\alpha,p}(0,T)$ a.s. However, $G\notin L^2(0,T)$. This singular behavior can only occur at the point $t=T$ as follows from Proposition \ref{prop:embeddingtype} \eqref{it:trvialemb}.
\end{example}

\begin{proof}[Proof of Proposition \ref{prop:M_Holder_2}]
To prove \eqref{it:LpestV}, note that for $0\leq s\leq r<t\leq T$, one has $1\leq (t-s)^{\beta} (t-r)^{-\beta}$, and hence by \eqref{eq:Itoisom} and \eqref{eq:Linftyproduct}, we have
\begin{align}
\nonumber \E \|J(G)(t) - J(G)(s)\|^p & \leq C \E \|G\|_{\gamma(s,t;H,X)}^p
\\ & \label{eq:incremJst}\leq C(t-s)^{\beta p} \E \|r\mapsto (t-r)^{-\beta} G(r)\|_{\gamma(s,t;H,X)}^p
\\ & \nonumber \leq C (t-s)^{\beta p} \E \|r\mapsto (t-r)^{-\beta} G(r)\|_{\gamma(0,t;H,X)}^p.
\end{align}
By Fubini's theorem we find that
\begin{align}
\nonumber \E[J(G)]^p_{W^{\alpha,p}(0,T;X)} & =
2\int_0^T \int_0^t \frac{\E \|J(G)(t) - J(G)(s)\|^p}{(t-s)^{\alpha p+1}}\;ds\;dt
\\ & \label{eq:Walphaestimate}  \leq C \E \int_0^T \int_0^t \frac{\|r\mapsto (t-r)^{-\beta} G(r)\|^p_{\gamma(0,t;H,X)}}{(t-s)^{1-(\beta-\alpha) p}}\;ds\;dt
\\ & \nonumber \leq C T^{(\beta-\alpha)p} \E \int_0^T \|r\mapsto (t-r)^{-\beta} G(r)\|^p_{\gamma(0,t;H,X)} \;dt
\\ & \nonumber = C T^{(\beta-\alpha)p} \|G\|_{L^p(\OO;V^{\beta,p}(0,T;H,X))}^p,
\end{align}
where we used $\beta>\alpha$. Taking $s=0$ in \eqref{eq:incremJst}, one also obtains
\begin{align*}
\E\|J(G)\|^p_{L^p(0,T;X)} & \leq C T^{\beta p} \E  \int_0^T  \|r\mapsto (t-r)^{-\beta} G(r)\|_{\g(0,t;H,X)}^p \, dt \\ & =  T^{\beta p} \|G\|_{L^p(\OO;V^{\beta,p}(0,T;H,X))}^p.
\end{align*}
Combining the estimates yields that $J(G)\in L^p(\OO;W^{\alpha,p}(0,T; X))$ and \eqref{it:LpestV} holds.

Before we continue to the proof of \eqref{it:L0estV}, we first prove that for adapted processes $G\in L^0(\OO;V^{\beta,p}(0,T;H,X))$, one has $J(G)\in L^0(\OO;W^{\alpha,p}(0,T; X))$. Let $\tau_n$ be the stopping time given by
\[\tau_n = \inf\{t\in[0,T]:\|\one_{[0,t]} G\|_{V^{\beta,p}(0,T;H,X)}\geq n\},\]
where we put $\tau_n = T$ if the infimum is taken over the empty set.
Then $\one_{[0,\tau_n]} G\in L^p(\OO;V^{\beta,p}(0,T;H,X))$ and hence $t\mapsto J(G)(t\wedge\tau_n) = J(\one_{[0,\tau_n]} G)(t)$ belongs to $L^0(\OO;W^{\alpha,p}(0,T; X))$. Since for almost every $\omega\in\OO$, we can find an $n\geq 1$ with $\tau_n(\omega) = T$, we obtain $J(G)\in W^{\alpha,p}(0,T; X)$ almost surely.

To prove \eqref{it:L0estV} we use another stopping time argument. By linearity we can replace $G_n$ by $G_n - G$ and hence it suffices to consider $G=0$. Moreover, by a subsequence argument it suffices to consider the case that $G_n\to 0$ in $V^{\beta,p}(0,T;H,X)$ almost surely.
For $n\geq 1$ let $\tau_n$ be the stopping time given by
\[\tau_n = \inf\{s\in[0,T]:  \|\one_{[0,s]} G_n\|_{V^{\beta,p}(0,T;H,X)}\geq 1\}.\]
Since $G_n\to 0$ in $V^{\beta,p}(0,T;H,X)$ almost surely, we find that $\limn \P(\tau_n = T) = 1$.
Since $\|\one_{[0,\tau_n]}G_n\|_{V^{\beta,p}(0,T;H,X)}\leq 1$, and
\[\|\one_{[0,\tau_n]}G_n\|_{V^{\beta,p}(0,T;H,X)}\leq \|G_n\|_{V^{\beta,p}(0,T;H,X)}\to 0 \ \text{a.s.},\]
the dominated convergence theorem gives that \[\one_{[0,\tau_n]}G_n\to 0\] in the space $L^p(\OO;V^{\beta,p}(0,T;H,X))$.
In particular, by \eqref{it:LpestV} one has $J(\one_{[0,\tau_n]}G_n)\to 0$ in $L^p(\OO;W^{\alpha,p}(0,T;X))$.
Let $\varepsilon>0$ be arbitrary. Then using $J(\one_{[0,\tau_n]}G_n)(t) = J(G_n)(t\wedge \tau_n)$ we find that
\begin{align*}
\P\big(\|J(G_n)\|_{W^{\alpha,p}(0,T;X)}\geq \varepsilon\big) & \leq
\P\big(\|J(G_n)\|_{W^{\alpha,p}(0,T;X)}\geq \varepsilon, \tau_n = T\big) + \P(\tau_n<T)
\\ & \leq \P\big(\|J(\one_{[0,\tau_n]} G_n)\|_{W^{\alpha,p}(0,T;X)}\geq \varepsilon\big) + \P(\tau_n<T)
\\ & \leq \varepsilon^{-p} \E\|J(\one_{[0,\tau_n]} G_n)\|_{W^{\alpha,p}(0,T;X)}^p + \P(\tau_n<T).
\end{align*}
Now the result follows by letting $n\to \infty$.
\end{proof}

The next result is one of the main results of the paper and gives convergence of paths of the forward integral in Sobolev norms. With Remark \ref{rmrk:sblv-hldr-embdng_2} one can derive convergence in the H\"older norm as a consequence.
\begin{theorem}\label{thrm:frwrd-ito-coincide-p>2}
Let $p \in [1, \infty)$ and $0<\alpha<\beta<\frac12$.
\begin{enumerate}[{\rm (1)}]
\item\label{it:forwardconvWLp} If $G\in L^p(\OO;V^{\beta,p}(0,T;H,X))$ is adapted, then
\[J^{-}(G,n)\to J(G) \ \ \text{in} \ \ \  L^p(\Omega; W^{\alpha,p}(0,T;X)).\]
\item\label{it:forwardconvWL0} If $G\in L^0(\OO;V^{\beta,p}(0,T;H,X))$ is adapted, then
\[J^{-}(G,n)\to J(G) \ \ \text{in} \ \ \  L^0(\Omega; W^{\alpha,p}(0,T;X)).\]
\end{enumerate}
\end{theorem}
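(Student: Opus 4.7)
The plan is to rewrite $J^-(G,n)$ as an It\^o integral plus a small boundary error and then invoke Proposition~\ref{prop:M_Holder_2}. Applying Proposition~\ref{prop:fwrd-extends-ito}~\eqref{it:Gn} to $\one_{[0,t]}G$ gives
\[
J^-(G,n)(t)=\int_0^{t+\frac{1}{n}}(\one_{[0,t]}G)_n(s)\,dW(s),\qquad (\one_{[0,t]}G)_n(s)=nP_n\!\!\int_{(s-\frac{1}{n})\vee 0}^{s\wedge t}G(r)\,dr.
\]
Introducing the adapted left-mollification $\widetilde G_n(s):=nP_n\!\int_{(s-\frac{1}{n})\vee 0}^{s}G(r)\,dr$ (with $G$ extended by zero past $T$), the two integrands agree on $[0,t]$, so splitting the stochastic integration at $s=t$ yields the decomposition
\[
J^-(G,n)(t)=J(\widetilde G_n)(t)+R_n(t),\qquad R_n(t):=\int_t^{t+\frac{1}{n}}nP_n\!\!\int_{s-\frac{1}{n}}^{t}G(r)\,dr\,dW(s).
\]

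The main term is handled through Proposition~\ref{prop:M_Holder_2}~\eqref{it:LpestV}: it suffices to prove $\widetilde G_n\to G$ in $L^p(\Omega;V^{\beta,p}(0,T;H,X))$. Writing $\widetilde G_n=S_n G$ for the operator $S_n\colon f\mapsto n\one_{[0,\frac{1}{n}]}*P_n f$, the contraction property $\|S_n\|\le 1$ on $L^2(0,T;H)$ lifts by \eqref{eq:gamma-ext} to a contraction on $\gamma(0,T;H,X)$; combining this with the $\gamma$-Fubini identity \eqref{eq:VMG} and the Kalton--Weis $\gamma$-multiplier theorem applied to the weighted truncations $f\mapsto(t-\cdot)^{-\beta}\one_{[0,t]}f$ shows that $(S_n)$ is uniformly bounded on $V^{\beta,p}(0,T;H,X)$. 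The convergence $\widetilde G_n\to G$ in $V^{\beta,p}$ is then verified directly on the dense subclass of step functions and extended to all of $V^{\beta,p}$ by the uniform bound.

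The main obstacle is showing $R_n\to 0$ in $L^p(\Omega;W^{\alpha,p}(0,T;X))$. For the $L^p(0,T;X)$-piece, the It\^o isomorphism \eqref{eq:Itoisom} reduces matters to the $\g(t,t+\frac{1}{n};H,X)$-norm of the integrand; a direct $L^2$-computation shows that the averaging map $g\mapsto[s\mapsto n\int_{s-\frac{1}{n}}^tg(r)\,dr]$ from $L^2(t-\tfrac{1}{n},t;H)$ into $L^2(t,t+\tfrac{1}{n};H)$ is an $L^2$-contraction, so the right-ideal property bounds this $\g$-norm by $\|G|_{[t-\frac{1}{n},t]}\|_{\g([t-\frac{1}{n},t];H,X)}$. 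The pointwise estimate $(t-r)^{\beta}\le n^{-\beta}$ on $[t-\frac{1}{n},t]$ together with \eqref{eq:Linftyproduct} then yields
\[
\E\|R_n(t)\|^p\le Cn^{-\beta p}\,\E\|(t-\cdot)^{-\beta}G\|_{\g(0,t;H,X)}^p,
\]
which integrates to $\|R_n\|_{L^p(\Omega;L^p(0,T;X))}^p=O(n^{-\beta p})\|G\|_{L^p(\Omega;V^{\beta,p})}^p$. For the fractional seminorm one splits the $(s,t)$-integration at $|t-s|=\tfrac{1}{n}$: on the far region $|t-s|>\tfrac{1}{n}$ the pointwise $L^p$ bound combined with the integrable singular tail $\int_{1/n}^{T}u^{-\alpha p-1}\,du=O(n^{\alpha p})$ produces a factor $O(n^{(\alpha-\beta)p})\to 0$; the near region $|t-s|<\tfrac{1}{n}$ is the delicate part, requiring a decomposition of $R_n(t)-R_n(s)$ into contributions from the disjoint and overlapping subintervals of $[s,s+\tfrac{1}{n}]$ and $[t,t+\tfrac{1}{n}]$, each controlled by the same right-ideal reasoning but gaining an additional H\"older-type factor $|t-s|^{\beta}n^{-\beta}$ which renders the singular integral convergent precisely because $\beta>\alpha$.

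Part~\eqref{it:forwardconvWL0} is then deduced from part~\eqref{it:forwardconvWLp} by the stopping time localization used in the proof of Proposition~\ref{prop:M_Holder_2}~\eqref{it:L0estV}: setting $\tau_m=\inf\{t\in[0,T]:\|\one_{[0,t]}G\|_{V^{\beta,p}}\ge m\}$, one applies part~(1) to the truncated process $\one_{[0,\tau_m]}G\in L^p(\Omega;V^{\beta,p})$ and passes to the limit $m\to\infty$ using $\P(\tau_m=T)\to 1$, which yields the required convergence in probability in $W^{\alpha,p}(0,T;X)$.
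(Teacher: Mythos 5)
Your decomposition $J^-(G,n)(t)=J(\widetilde G_n)(t)+R_n(t)$ is a genuinely different route from the paper's. The paper never extracts a $t$-independent mollified integrand: it observes that $J^{-}(G,n)(t)-J^{-}(G,n)(s)=\int_0^\infty\big(n\one_{[0,\frac1n]}*(\one_{[s,t]}P_nG)\big)\,dW$, applies the It\^o isomorphism \eqref{eq:Itoisom} for each fixed pair $(s,t)$ inside the double integral defining the $W^{\alpha,p}$-seminorm, dominates the resulting integrand by $2^p|t-s|^{\beta p}\,\|r\mapsto(t-r)^{-\beta}G(r)\|^p_{\g(0,t;H,X)}$ via Young's inequality and \eqref{eq:Linftyproduct}, and concludes by dominated convergence together with \cite[Proposition 2.4]{NVW1}. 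This sidesteps exactly the two places where your argument has to work hard: there is no remainder term and no need for mollification estimates in $V^{\beta,p}$. Your remainder analysis, including the near/far splitting at $|t-s|=1/n$, is essentially sound --- each of the three pieces of $R_n(t)-R_n(s)$ on the near region can be bounded by the right-ideal property with a net factor of order $n^{(\alpha-\beta)p}$ after integration --- and your part (2) coincides with the paper's localization argument.

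The genuine gap is the claim that $(S_n)$ is uniformly bounded on $V^{\beta,p}(0,T;H,X)$, hence that $\widetilde G_n\to G$ there. The Kalton--Weis multiplier theorem in the form used in the paper (an $R$-bounded family of pointwise multipliers) does not apply here, because the weight $(t-\cdot)^{-\beta}$ does not commute with the convolution $S_n$: writing
\begin{equation*}
(t-s)^{-\beta}(S_nG)(s)=n\int_{s-\frac1n}^{s}\Big(\tfrac{t-r}{t-s}\Big)^{\beta}\,(t-r)^{-\beta}G(r)\,dr,
\end{equation*}
you are facing, for each fixed $t$, an integral operator on $L^2(0,t;H)$ whose kernel $n\one_{[s-\frac1n,s]}(r)\big(\tfrac{t-r}{t-s}\big)^{\beta}$ has row integrals $\int k(s,r)\,dr$ that blow up as $s\uparrow t$, so the naive Schur test fails and the contraction property of $S_n$ on $\gamma(0,T;H,X)$ gives nothing. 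The claim is in fact true --- a weighted Schur test with weight $(t-r)^{-\gamma}$ for any $\beta<\gamma<1-\beta$ yields $L^2(0,t;H)$-bounds uniform in $n$ and $t$, the right-ideal property transfers these to $\g(0,t;H,X)$, and strong operator convergence plus dominated convergence in $(t,\omega)$ then give $\widetilde G_n\to G$ in $L^p(\OO;V^{\beta,p})$ --- but none of this is in your writeup, and it is the crux of why this route is harder than the paper's. Either supply this kernel estimate or estimate the increments $I^{-}(\one_{[s,t]}G,n)$ directly as the paper does.
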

From Remark \ref{rem:welldef} we see that $J(G)(t)$ and $J^{-}(G,n)(t)$ are well-defined for every $t\in [0,T)$.

Recall from \eqref{eq:identityIn} that
\[J^{-}(G,n)(t) = \int_0^{\infty} G_n \, d W, \ \ \ \text{where} \ \ \ \ G_n = n\one_{[0,\frac1n]} * (\one_{[0,t]}P_n G).\]
Since $G_n\to G$ in $L^0(\OO;V^{\beta,p}(0,T;H,X)$, at first sight it seems that Proposition \ref{prop:M_Holder_2} can be used directly to obtain Theorem \ref{thrm:frwrd-ito-coincide-p>2}. Unfortunately, Proposition \ref{prop:M_Holder_2} does not apply because the process $G_n$ also depends on $t$, and we need to proceed differently.

\begin{proof}
Before proving the assertion we note that if $G\in L^0(\OO;V^{\beta,p}(0,T;H,X))$, then $J(G)\in W^{\alpha,p}(0,T;X)$ a.s.\ by Proposition \ref{prop:M_Holder_2}. We claim that $J^{-}(G,n)\in W^{\alpha,p}(0,T;X)$ a.s.
Indeed,
\begin{align*}
\|J^{-}&(G,n)(t) - J^{-}(G,n)(s)\| \\&= \Big\|\sum_{k=1}^n n\int_s^t G(r) h_k (W(r+1/n)h_k - W(r)h_k)\;dr\Big\|
\\ & \leq \sum_{k=1}^n n \Big\|\int_0^T G(r) h_k \one_{[s,t]}(r)(W(r+1/n)h_k - W(r)h_k)\;dr\Big\|=:\sum_{k=1}^n n J_k.
\end{align*}
By \eqref{eq:gamma-estm} we find that
\begin{align*}
J_k &\leq \|r\mapsto (t-r)^{-\beta} \one_{[0,r]} G(r)\|_{\g(0,t;H,X)}\\&\qquad \qquad \times \|r\mapsto (t-r)^{\beta} (W(r+1/n)h_k - W(r)h_k)\|_{L^2(s,t)}.
\end{align*}
Since the paths of $r\mapsto W(r+1/n)h_k - W(r)h_k$ are a.s. bounded, we have
\[\|r\mapsto (t-r)^{\beta} (W(r+1/n)h_k - W(r)h_k)\|_{L^2(s,t)} \leq C(W,n) (t-s)^{\beta+\frac12},\]
where $\displaystyle C(W,n) = 2\sup_{r\in [0,T+1]} \sup_{1\leq k\leq n} |W(r)h_k|.$ It follows that
\begin{align*}
[J^{-}&(G,n)]_{W^{\alpha,p}(0,T;X)}^p
 = 2 \int_0^T \int_0^t \frac{\|J^{-}(G,n)(t) - J^{-}(G,n)(s)\|^p}{(t-s)^{\alpha p +1 }} \, ds \, dt
\\ & 2\leq C_{W,n} \int_0^T \int_0^t \|r\mapsto (t-r)^{-\beta} \one_{[0,r]} G(r)\|_{\g(0,t;H,X)}^p (t-s)^{(\beta-\alpha + \frac12)p-1} \, ds \, dt
\\ & \leq C_{W,n} C\|G\|_{V^{\beta,p}(0,T;X)}^p.
\end{align*}
Similarly, one sees that $\|J^{-}(G,n)\|_{L^p(0,T;X)}<\infty$ a.s.\ and the claim follows.

\eqref{it:forwardconvWLp}: Observe that by \eqref{eq:identityIn} and \eqref{eq:Itoisom},
\begin{equation}\label{eq:Jminuniform}
\begin{aligned}
\EE&[J^{-}(G,n)  - J(G)]^p_{W^{\alpha,p}(0,T;X))}
\\ &
\leq  C \E \int_0^T \int_0^T \one_{[0,t]}(s)\frac{\|n\one_{[0,\frac1n]} * (\one_{[s,t]}P_n G) - \one_{[s,t]}G\|_{\g(\R_+;H,X)}^p}{|t-s|^{\alpha p+1}} \, ds \, dt.
\end{aligned}
\end{equation}
We will use the dominated convergence theorem to show that the latter converges to zero as $n\to\infty$. Indeed, by Young's inequality one has $\|n\one_{[0,\frac1n]} * f\|_{L^2(\R;H)}\leq \|f\|_{L^2(\R;H)}$ for $f\in L^2(\R;H)$.
Therefore, by the right-ideal property and \eqref{eq:Linftyproduct} for $0\leq s\leq t\leq T$,
\begin{align*}
\|n\one_{[0,\frac1n]} * (\one_{[s,t]}P_n G) - \one_{[s,t]}G\|_{\g(\R_+;H,X)}& \leq 2 \E \|\one_{[s,t]}G\|_{\g(s,t;H,X)}
\\ & \leq 2|t-s|^{\beta} \|r\mapsto (t-r)^{-\beta} G(r)\|_{\g(0,t;H,X)}.
\end{align*}
Now the latter is integrable on the space $\OO\times [0,T]^2$ with measure $\one_{[0,t]}(s) (t-s)^{-\alpha p-1}\, ds\, dt\, d\P$, and it dominates the function $\one_{[0,t]}(s) \|n\one_{[0,\frac1n]} * (\one_{[s,t]}P_n G) - \one_{[s,t]}G\|_{\g(\R_+;H,X)}^p$, which depends on $0\leq s\leq t\leq T$ and $\omega\in \OO$.
Moreover, by \cite[Proposition 2.4]{NVW1}
\begin{align*}
\limn \|n\one_{[0,\frac1n]} * (\one_{[s,t]}P_n G) - \one_{[s,t]}G\|_{\g(\R_+;H,X)} = 0
\end{align*}
for all $0\leq s\leq t\leq T$ and a.s.\ on $\OO$.
Therefore, by the dominated convergence theorem, the right-hand side of \eqref{eq:Jminuniform} tends to zero as $n\to \infty$.

A similar argument yields that $\EE\|J^-(G,n) - J(G)\|^p_{L^p(0,T;X)} \to 0$ as $n\to\infty$. This proves \eqref{it:forwardconvWLp}.

Next we prove \eqref{it:forwardconvWL0} using a stopping time argument. Consider an element $G\in L^0(\OO;V^{\beta,p}(0,T;H,X))$.
For each $m\geq 1$ define
\[\tau_m = \inf\{[0,T]: \|\one_{[0,t]}G\|_{V^{\beta,p}(0,T;H,X)}\geq m\},\]
where we let $\tau_m = T$ if the infimum is taken over the empty set.
Let $G_{m} = \one_{[0,\tau_m]} G$. Clearly, $\lim_{m\to\infty} \P(\tau_m = T) = 1$.
Observe that almost surely, for all $t\in [0,T]$, $J(G)(\tau_m\wedge t) = J(\one_{[0,\tau_m]} G)(t)$ and  $J^{-}(G,n)(\tau_m\wedge t) = J^{-}(\one_{[0,\tau_m]} G,n)(t)$. The latter is trivial as $J^{-}(\cdot, n)$ is defined in a pathwise sense.

Let $\varepsilon>0$ and $\delta>0$ be arbitrary and choose $m$ so large that $\P(\tau_m <T) <\delta$. It follows that for all $n\geq 1$,
\begin{align*}
\P\big(&\|J(G)- J^{-}(G,n)\|_{W^{\alpha,p}(0,T;X)}\geq \varepsilon\big)
\\ & \leq  \P\big(\|J(G)- J^{-}(G,n)\|_{W^{\alpha,p}(0,T;X)}\geq \varepsilon, \tau_m = T\big) + \P(\tau_m<T)
\\ & \leq \P\big(\|J(\one_{[0,\tau_m]}G)- J^{-}(\one_{[0,\tau_m]}G ,n)\|_{W^{\alpha,p}(0,T;X)}\geq \varepsilon\big) + \delta
\\ & \leq \varepsilon^{-p} \E\|J(\one_{[0,\tau_m]}G)- J^{-}(\one_{[0,\tau_m]}G ,n)\|_{W^{\alpha,p}(0,T;X)}^p  + \delta.
\end{align*}
Since $\one_{[0,\tau_m]}G$  satisfies the conditions of \eqref{it:forwardconvWLp} it follows that
\[\limsup_{n\to \infty} \P\big(\|J(G)- J^{-}(G,n)\|_{W^{\alpha,p}(0,T;X)}\geq \varepsilon\big) \leq \delta.\]
Since $\delta>0$ was arbitrary, the result follows.
\end{proof}

If the space $X$ is not only a {\umd} space, but has type $2$ as well, then one can obtain further conditions for a process to be in the spaces considered in Theorem \ref{thrm:frwrd-ito-coincide-p>2}.
Both results below follow immediately from the embedding of Proposition \ref{prop:embeddingtype} \eqref{it:typeemb}, Proposition \ref{prop:M_Holder_2} and Theorem \ref{thrm:frwrd-ito-coincide-p>2}. Similar corollaries can be deduced from Proposition \ref{prop:embeddingtype} \eqref{it:cotypeemb}.

\begin{corollary}\label{cor:type2prop}
Assume $X$ has type $2$, and let $p \in [2, \infty)$ and $0<\alpha<\frac12$.
If $G \in L^0(\OO;L^p(0,T;\g(H,X)))$ is adapted, then $J(G)\in L^0(\OO;W^{\alpha,p}(0,T; X))$. Furthermore, the following assertions hold:
\begin{enumerate}[{\rm (1)}]
\item\label{it:LpestVtype2}
There exists a constant $C$ independent of $G$ such that
\[\|J(G)\|_{L^p(\OO;W^{\alpha,p}(0,T; X))} \leq C \|G\|_{L^p(\OO;L^p(0,T;\g(H,X)))}.\]
\item\label{it:L0estVtype2} Assume that for every $n\geq 1$, $G_n \in L^0(\OO;L^p(0,T;\g(H,X)))$ is an adapted process. If $G_n\to G$ in $L^0(\OO;L^p(0,T;\g(H,X)))$, then
\[J(G_n) \to J(G) \ \ \text{in $L^0(\OO;W^{\alpha,p}(0,T; X))$}.\]
\end{enumerate}
\end{corollary}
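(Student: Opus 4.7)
The plan is to prove this corollary as an immediate consequence of the embedding Proposition \ref{prop:embeddingtype}\eqref{it:cotypeemb}, combined with Proposition \ref{prop:M_Holder_2}. The type $2$ hypothesis on $X$ enters precisely to activate this embedding, which is the only new ingredient beyond what Proposition \ref{prop:M_Holder_2} already provides.

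Given $\alpha \in (0, \tfrac12)$ and $p \in [2,\infty)$, I would first choose an auxiliary exponent $\beta \in (\alpha, \tfrac12)$; this is possible since $\alpha < \tfrac12$. By Proposition \ref{prop:embeddingtype}\eqref{it:cotypeemb}, the type $2$ hypothesis yields the continuous embedding
\[
L^p(0,T;\gamma(H,X)) \hookrightarrow V^{\beta,p}(0,T;H,X),
\]
with a constant $C = C(\beta,p,T)$ independent of $G$. Consequently, an adapted process $G \in L^0(\OO; L^p(0,T;\gamma(H,X)))$ is automatically an adapted process in $L^0(\OO; V^{\beta,p}(0,T;H,X))$, and likewise $G \in L^p(\OO; L^p(0,T;\gamma(H,X)))$ implies $G \in L^p(\OO; V^{\beta,p}(0,T;H,X))$ with
\[
\|G\|_{L^p(\OO; V^{\beta,p}(0,T;H,X))} \leq C \|G\|_{L^p(\OO; L^p(0,T;\gamma(H,X)))}.
\]

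From here, assertion \eqref{it:LpestVtype2} follows directly by invoking Proposition \ref{prop:M_Holder_2}\eqref{it:LpestV} with this choice of $\beta$ and chaining the two inequalities. For assertion \eqref{it:L0estVtype2}, convergence $G_n \to G$ in $L^0(\OO; L^p(0,T;\gamma(H,X)))$ immediately transfers, via the same embedding applied pathwise, to convergence $G_n \to G$ in $L^0(\OO; V^{\beta,p}(0,T;H,X))$; then Proposition \ref{prop:M_Holder_2}\eqref{it:L0estV} delivers the desired convergence $J(G_n) \to J(G)$ in $L^0(\OO; W^{\alpha,p}(0,T;X))$.

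There is no genuine obstacle here: the substantive work was already done in establishing the $V^{\beta,p}$--theory and in proving the type $2$ embedding. The only minor care-point is the ordering $\alpha < \beta < \tfrac12$, which is needed for Proposition \ref{prop:M_Holder_2} to apply but imposes no restriction beyond $\alpha < \tfrac12$, since $\beta$ is an auxiliary parameter that can always be chosen.
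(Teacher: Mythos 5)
Your proposal is correct and matches the paper's own (one-line) proof: the paper likewise obtains this corollary by combining the type~$2$ embedding $L^p(0,T;\gamma(H,X))\hookrightarrow V^{\beta,p}(0,T;H,X)$ from Proposition \ref{prop:embeddingtype} with Proposition \ref{prop:M_Holder_2}, after picking an auxiliary $\beta\in(\alpha,\tfrac12)$. Note only that the paper's text cites item \eqref{it:typeemb} while the embedding actually needed (and the one you correctly invoke) is item \eqref{it:cotypeemb}; this is a label mix-up in the paper, not a gap in your argument.
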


\begin{corollary}\label{cor:type2}
Assume $X$ has type $2$, and let $p \in [2, \infty)$.
\begin{enumerate}[{\rm (1)}]
\item\label{it:forwardconvWLp2} If $G\in L^p(\OO; L^p(0,T;\gamma(H,X)))$ is adapted, then for all $\alpha\in (0,\tfrac12)$,
\[J^{-}(G,n)\to J(G) \ \ \ \text{in $L^p(\Omega; W^{\alpha,p}(0,T;X))$.}\]
\item\label{it:forwardconvWL02} If $G\in L^0(\OO;L^p(0,T;\g(H,X)))$ is adapted, then for all $\alpha\in (0,\tfrac12)$,
\[J^{-}(G,n)\to J(G) \ \ \  \text{in $L^0(\Omega; W^{\alpha,p}(0,T;X))$}.\]
\end{enumerate}
\end{corollary}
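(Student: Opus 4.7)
The plan is to reduce immediately to Theorem \ref{thrm:frwrd-ito-coincide-p>2} via the type $2$ embedding $L^p(0,T;\gamma(H,X))\hookrightarrow V^{\beta,p}(0,T;H,X)$ from Proposition \ref{prop:embeddingtype}\eqref{it:cotypeemb}, which is valid for any $p\in[2,\infty)$ and any $\beta\in[0,\tfrac12)$. For a given $\alpha\in(0,\tfrac12)$, I will choose an auxiliary parameter $\beta\in(\alpha,\tfrac12)$, so that $0<\alpha<\beta<\tfrac12$, i.e.\ the hypothesis on $(\alpha,\beta)$ required in Theorem \ref{thrm:frwrd-ito-coincide-p>2} is met.

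For part \eqref{it:forwardconvWLp2}, given an adapted $G\in L^p(\OO;L^p(0,T;\gamma(H,X)))$, Proposition \ref{prop:embeddingtype}\eqref{it:cotypeemb} yields
\[
\|G\|_{L^p(\OO;V^{\beta,p}(0,T;H,X))}\leq C\|G\|_{L^p(\OO;L^p(0,T;\gamma(H,X)))}<\infty,
\]
so $G\in L^p(\OO;V^{\beta,p}(0,T;H,X))$. Theorem \ref{thrm:frwrd-ito-coincide-p>2}\eqref{it:forwardconvWLp} then gives $J^-(G,n)\to J(G)$ in $L^p(\Omega;W^{\alpha,p}(0,T;X))$, which is what we want.

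For part \eqref{it:forwardconvWL02}, the same pointwise (in $\omega$) embedding
\[
\|G(\omega,\cdot)\|_{V^{\beta,p}(0,T;H,X)}\leq C\|G(\omega,\cdot)\|_{L^p(0,T;\gamma(H,X))}
\]
shows that an adapted $G\in L^0(\OO;L^p(0,T;\gamma(H,X)))$ automatically lies in $L^0(\OO;V^{\beta,p}(0,T;H,X))$, and that convergence in $L^0(\OO;L^p(0,T;\gamma(H,X)))$ implies convergence in $L^0(\OO;V^{\beta,p}(0,T;H,X))$. Applying Theorem \ref{thrm:frwrd-ito-coincide-p>2}\eqref{it:forwardconvWL0} then gives $J^-(G,n)\to J(G)$ in $L^0(\Omega;W^{\alpha,p}(0,T;X))$.

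There is really no genuine obstacle here; the only thing to be careful about is the order of quantifiers, namely that Theorem \ref{thrm:frwrd-ito-coincide-p>2} demands $\alpha<\beta$ strictly, whereas the present corollary asks only for $\alpha\in(0,\tfrac12)$. This is handled by the freedom to pick $\beta\in(\alpha,\tfrac12)$ once $\alpha$ is fixed, which is possible precisely because Proposition \ref{prop:embeddingtype}\eqref{it:cotypeemb} is available for the whole range $\beta\in[0,\tfrac12)$ when $X$ has type $2$.
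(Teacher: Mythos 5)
Your proposal is correct and is essentially the paper's own argument: the authors state that the corollary ``follows immediately'' from the type~$2$ embedding $L^p(0,T;\gamma(H,X))\hookrightarrow V^{\beta,p}(0,T;H,X)$ of Proposition~\ref{prop:embeddingtype} together with Theorem~\ref{thrm:frwrd-ito-coincide-p>2}, which is exactly your reduction, including the choice of an auxiliary $\beta\in(\alpha,\tfrac12)$. The only superfluous bit is your remark about convergence of a sequence of integrands in part~\eqref{it:forwardconvWL02}, which is not needed here (there is no approximating sequence $G_n$ of integrands in this corollary), but it does no harm.
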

Again, Remark \ref{rmrk:sblv-hldr-embdng_2} applies to the above results and this will give convergence in the H\"older norm. The above result contains Theorem \ref{thm:intro} as a special case.

\section{Nonadapted pointwise multipliers\label{sec:nonadaptedmult}}

In the next result we give sufficient smoothness conditions on a possibly non-adapted operator-valued process $M$ and an adapted process $G$, such that $MG$ becomes forward integrable. Moreover we derive a neat integration by parts formula which yields a very useful representation formula for the forward integral. Recall that $I(G) = \int_0^T G\; dW$.

\begin{theorem}\label{thm:productforward}
Let $X$ and $Y$ be {\umd} Banach spaces. Assume $p\in (2, \infty)$, $\delta \in [0,3/2)$ and $\beta\in (\tfrac1p,\tfrac12)$ are such that $\beta-\frac1p-\delta+1>0$.
Let $M:[0,T]\times\OO\to \calL(X,Y)$ be such that
\begin{enumerate}[(i)]
\item For all $x\in X$, $(t, \omega)\mapsto M(t,\omega) x$ is strongly measurable.
\item For almost all $\omega\in\OO$, $t\mapsto M(t,\omega)$ is continuously differentiable on $[0,T)$ and there exists a constant $\delta\in [0,\tfrac32)$ such that for almost all $\omega\in\OO$, there is a constant $C(\omega)>0$ such that
\begin{align*}
\|M'(t,\omega)\| \leq  C(\omega)(T-t)^{-\delta}, \ \ t\in [0,T).
\end{align*}
\end{enumerate}
Assume $G\in L^0(\OO;V^{\beta,p}(0,T;X))$ is adapted and $MG$ is weakly in $L^2(0,T;H)$.
Then $M G$ is forward integrable, $s\mapsto M'(s) I(\one_{[s,T]}G)\in L^1(0,T;Y)$ almost surely and
\begin{equation}\label{eq:forwardintbyparts}
\int_0^T M(s) G(s)\, d^{-}W(s) = M(0) I(G) + \int_0^T M'(s) I(\one_{[s,T]}G) \, ds.
\end{equation}
\end{theorem}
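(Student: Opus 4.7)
The strategy is to prove, for each $n\geq 1$, the pathwise identity
\begin{equation*}
I^{-}(MG,n) = M(0)I^{-}(G,n) + \int_0^T M'(s)\, I^{-}(\one_{[s,T]}G,n)\,ds,\tag{$\ast$}
\end{equation*}
and then to pass to the limit as $n\to\infty$ using Theorem \ref{thrm:frwrd-ito-coincide-p>2}. To establish $(\ast)$, fix $\omega$ and set $\phi_n(s):=n\sum_{k=1}^{n} G(s)h_k\bigl(W(s+\tfrac1n)-W(s)\bigr)h_k\in X$, which is bounded on $[0,T]$, so $\Phi_n(s):=\int_s^T\phi_n(r)\,dr=I^{-}(\one_{[s,T]}G,n)$. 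Integrating the bound $\|M'(s)\|\leq C(\omega)(T-s)^{-\delta}$ gives $\|M(s)\|\lesssim 1+(T-s)^{1-\delta}$, and since $\delta<3/2<2$ both $M\phi_n$ and $M'\Phi_n$ are Bochner integrable on $(0,T)$. Applying the classical Bochner integration by parts on $[0,T-\varepsilon]$ and using that $\|\Phi_n(T-\varepsilon)\|\leq\varepsilon\|\phi_n\|_\infty$ with $\|M(T-\varepsilon)\|\lesssim 1+\varepsilon^{1-\delta}$ makes the boundary term at $T-\varepsilon$ vanish as $\varepsilon\to 0$; dominated convergence in the remaining terms then yields $(\ast)$.

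\textbf{Next,} I would pick $\alpha$ with $\max\{\tfrac1p,\ \tfrac1p+\delta-1\}<\alpha<\beta$, an interval which is nonempty precisely by the hypothesis $\beta-\tfrac1p-\delta+1>0$. By Theorem \ref{thrm:frwrd-ito-coincide-p>2}, $J^{-}(G,n)\to J(G)$ in $L^0(\OO;W^{\alpha,p}(0,T;X))$, and since $\alpha>\tfrac1p$ the Sobolev embedding \eqref{eq:fractionalSobolevemb} upgrades this to convergence in $L^0(\OO;C^{\alpha-1/p}([0,T];X))$. Writing $R_n(t):=J^{-}(G,n)(t)-J(G)(t)$, the seminorm $[R_n]_{C^{\alpha-1/p}([0,T];X)}$ tends to zero in probability. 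Proposition \ref{prop:fwrd-extends-ito}\eqref{it:Gnforw} gives $I^{-}(G,n)\to I(G)$ in probability, taking care of the first term of $(\ast)$.

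\textbf{For the integral term,} the identities $I^{-}(\one_{[s,T]}G,n)=J^{-}(G,n)(T)-J^{-}(G,n)(s)$ and $I(\one_{[s,T]}G)=J(G)(T)-J(G)(s)$ give $I^{-}(\one_{[s,T]}G,n)-I(\one_{[s,T]}G)=R_n(T)-R_n(s)$. The key estimate combining the H\"older bound $\|R_n(T)-R_n(s)\|_X\leq [R_n]_{C^{\alpha-1/p}}(T-s)^{\alpha-1/p}$ with $\|M'(s)\|\leq C(\omega)(T-s)^{-\delta}$ reads
\begin{equation*}
\Bigl\|\int_0^T M'(s)\bigl(R_n(T)-R_n(s)\bigr)\,ds\Bigr\|_Y \leq C(\omega)\,[R_n]_{C^{\alpha-1/p}}\int_0^T(T-s)^{\alpha-\tfrac1p-\delta}\,ds,
\end{equation*}
where the last integral is finite exactly because $\alpha-\tfrac1p-\delta>-1$. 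Applying the same H\"older estimate to $J(G)$ itself shows $s\mapsto M'(s)I(\one_{[s,T]}G)$ is in $L^1(0,T;Y)$ almost surely. Letting $n\to\infty$ in $(\ast)$ yields the claimed formula \eqref{eq:forwardintbyparts}; forward integrability of $MG$ is a byproduct, since all three terms converge in probability.

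\textbf{Main obstacle.} The delicate point is handling the non-integrable singularity $(T-s)^{-\delta}$ of $M'$ at $T$ when $\delta\geq 1$. It is defeated by trading that singularity against the H\"older exponent $\alpha-\tfrac1p$ of the stochastic integral near $T$, which is precisely why the full Sobolev/H\"older convergence provided by Theorem \ref{thrm:frwrd-ito-coincide-p>2}, rather than mere pointwise convergence of $J^{-}(G,n)$, is indispensable here.
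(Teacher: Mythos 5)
Your overall strategy coincides with the paper's: establish the integration-by-parts identity $(\ast)$ for each fixed $n$ by truncating away from the singularity at $T$, then let $n\to\infty$ by trading the H\"older exponent $\alpha-\tfrac1p$ of $J^{-}(G,n)-J(G)$ (obtained from Theorem \ref{thrm:frwrd-ito-coincide-p>2} and the embedding \eqref{eq:fractionalSobolevemb}) against the singularity $(T-s)^{-\delta}$ of $M'$, with $\alpha$ chosen so that $\alpha-\tfrac1p-\delta+1>0$. Your second and third steps are essentially the paper's argument verbatim.

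The gap is in your derivation of $(\ast)$. You assert that $\phi_n(s)=n\sum_{k\le n}G(s)h_k\,(W(s+\tfrac1n)-W(s))h_k$ is \emph{bounded} on $[0,T]$, and from this deduce Bochner integrability of $M\phi_n$, the bound $\|\Phi_n(T-\varepsilon)\|\le\varepsilon\|\phi_n\|_\infty$, and the applicability of classical Bochner integration by parts. But $G$ is only assumed to lie in $V^{\beta,p}$, so $G(\cdot)h_k$ is merely an element of $\gamma(0,t;X)$ (in the scalar case, an arbitrary $L^2$ function); it need not be pointwise bounded, nor need $\phi_n$ be Bochner integrable. The integrals $\int_0^t M(s)\phi_n(s)\,ds$ must be understood as the Pettis-type integrals \eqref{eq:weakint} of $\gamma$-radonifying functions, and the identity $(\ast)$ on $[0,t]$, $t<T$, requires the $\gamma$-integration by parts of Lemma \ref{lem:intbyparts}, which rests on $R$-boundedness, the Kalton--Weis multiplier theorem and a density argument --- not on classical Bochner calculus. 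Relatedly, the correct decay of the boundary term is $\|\Phi_n(T-\varepsilon)\|\le C_n\,\varepsilon^{1/2}$ (from the $C^{1/2}$ estimate below \eqref{eq:J-def}, via \eqref{eq:gamma-estm}), not $O(\varepsilon)$; this is precisely why the relevant threshold is $\delta<\tfrac32$ and not $\delta<2$ as your parenthetical ``$\delta<3/2<2$'' suggests. With Lemma \ref{lem:intbyparts} on $[0,t]$ and the $\varepsilon^{1/2}$ (or, as in the paper, the $(T-t)^{\alpha-1/p}$) rate in the limit $t\uparrow T$, your step 1 can be repaired, but as written it does not apply in the generality of the theorem.
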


Note that we do not assume any adaptedness properties on $M$.

\begin{proof}
By Proposition \ref{prop:embeddingtype}, \eqref{it:trvialemb} $G\in L^0(\OO;\gamma(0,T;H,X))$.

Fix $t\in (0,T)$. Let $f_k = n G(\cdot) h_k (W(\cdot+1/n)h_k - W(\cdot)h_k)$. Note that by \eqref{eq:Linftyproduct} and the path continuity of $W h_k$, we have $f_k\in L^0(\OO;\g(0,t;X))$. Let $F_k:[0,t]\times\OO\to X$ be given by $F_k(s) = \int_s^t f_k(r) \, dr$ and note that
\[\sum_{k=1}^n F_k(s) = I^{-}(\one_{[s,t]}G,n).\]
Fix $\omega\in \OO$. By Lemma \ref{lem:intbyparts} both $M G$ and $M f_k$ are in $\g(0,t;H,Y)$ and
\begin{align*}
I^{-}(M \one_{[0,t]}G,n) & = \sum_{k=1}^n \int_0^t M(s) f_k(s) \,ds = \sum_{k=1}^n M(0)F_k(0) + \int_0^t M'(s) F_k(s) \,ds
\\ & = M(0)I^{-}(\one_{[0,t]}G,n) + \int_0^t M'(s) I^{-}(\one_{[s,t]}G,n) \,ds.
\end{align*}
Now letting $t\uparrow T$, it follows from the observation below \eqref{eq:J-def} that
\[M(0)I^{-}(\one_{[0,t]}G,n)\to M(0)I^{-}(G,n) \ \ \text{and} \ I^{-}(M \one_{[0,t]}G,n)\to I^{-}(MG,n).\]
Next we claim that for $t\uparrow T$,
\begin{align}\label{eq:foreachnM}
\int_0^t M'(s) I^{-}(\one_{[s,t]}G,n) \,ds \to \int_0^T M'(s) I^{-}(\one_{[s,T]}G,n) \,ds.
\end{align}
Indeed, choose $\alpha\in (\tfrac1p,\beta)$ such that $\alpha-\frac1p-\delta+1>0$.
Note that by Theorem \ref{thrm:frwrd-ito-coincide-p>2} and \eqref{eq:fractionalSobolevemb}, $K:=\|J^{-}(G,n)\|_{C^{\alpha-\frac1p}(0,T;X)}<\infty$ for almost all $\omega\in\OO$. The difference of both of the terms in \eqref{eq:foreachnM} can be estimated by
\begin{align*}
\int_t^T & \|M'(s) I^{-}(\one_{[s,T]}G,n)\| \,ds + \int_0^t \|M'(s) I^{-}(\one_{[t,T]}G,n)\| \,ds
\\ & \leq \int_t^T \|M'(s) (J^{-}(G,n)(T) - J^{-}(G,n)(s)) \| \,ds
\\ &\qquad + \int_0^t \|M'(s) (J^{-}(G,n)(T) - J^{-}(G,n)(t))\| \,ds
\\ & \leq C K \Big[ \int_t^T (T-s)^{-\delta} (T-s)^{\alpha-\frac1p}\, ds + (T-t)^{\alpha-\frac1p} \int_0^t (T-s)^{-\delta} \,ds\, \Big]
\\ & \leq C K \Big[ (T-t)^{\alpha-\frac1p-\delta+1} + [T^{-\delta+1} + (T-t)^{-\delta+1}]  (T-t)^{\alpha-\frac1p}\Big],
\end{align*}
and the latter goes to zero as $t\uparrow T$.

We conclude that almost surely for every $n\geq 1$
\begin{equation}\label{eq:intbypartforn}
I^{-}(M G,n) =  M(0)I^{-}(G,n) + \int_0^T M'(s) I^{-}(\one_{[s,T]}G,n) \,ds.
\end{equation}
Hence to prove \eqref{eq:forwardintbyparts}, we will show that we can let $n\to \infty$ in \eqref{eq:intbypartforn}. Obviously, $M(0)I^{-}(G,n)\to M(0)I(G)$. From Theorem \ref{thrm:frwrd-ito-coincide-p>2} and \eqref{eq:fractionalSobolevemb}
we find that $\xi_n = [J^{-}(G,n) - J(G)]_{C^{\alpha-\frac1p}(0,T;X)}\to 0$ in probability as $n\to \infty$. It follows that
\begin{align*}
&\int_0^T \big\|M'(s) [I^{-}(\one_{[s,T]}G,n) - I^{-}(\one_{[s,T]}G)]\big\|\,ds
\\ & \leq C \int_0^T (T-s)^{-\delta} \|I^{-}(\one_{[s,T]}G,n) - I^{-}(\one_{[s,T]}G)\|\,ds
\\ & \leq C \int_0^T (T-s)^{-\delta} \|(J^{-}(G,n)(T) - J^{-}(G,n)(s)) - (J^{-}(G)(T) - J^{-}(G)(s))\|\, ds
\\ & \leq C \xi_n \int_0^T (T-s)^{-\delta+\alpha-\frac1p} \,ds
\\ & = C' \xi_n T^{1-\delta+\alpha-\frac1p}.
\end{align*}
Since the latter converges to zero in probability, it follows that the right-hand side of \eqref{eq:intbypartforn} converges and hence $MG$ is forward integrable and
\eqref{eq:forwardintbyparts} holds.
\end{proof}

\begin{remark}\label{rem:suffweaklL2}
Assume $M$ satisfies (i) and (ii) of Theorem \ref{thm:productforward}.
\begin{enumerate}[(1)]
\item If $\delta\in [0,1)$, then by Lemma \ref{lem:intbyparts} one has $MG\in L^0(\OO;\g(0,T;H,Y))$ whenever $G\in L^0(\OO;\g(0,T;H,Y))$. In particular $MG$ is weakly in $L^2(0,T;H)$.
\item If $0\leq \delta <\tfrac32 - \tfrac1p$ and $G\in L^0(\OO;L^p(0,T;\g(H,X)))$, then we have $MG\in L^0(\OO;L^2(0,T;\g(H,Y)))$. Indeed, without loss of generality we can assume $\delta> 1$. It follows that
\begin{align*}
\|M(t)-M(0)\| \leq C\int_0^t (T-s)^{-\delta} \, ds \leq C \big((T-t)^{1-\delta} + T^{1-\delta}\big).
\end{align*}
Therefore, by H\"older's inequality with $\frac{1}{q} + \frac{2}{p} = 1$,
\begin{align*}
\|MG\|_{L^2(0,T;\g(H,Y))} & \leq C \Big(\int_0^T \big((T-t)^{1-\delta} + T^{1-\delta}\big)^2 \|G(t)\|^2_{\g(H,X)} \, dt\Big)^{1/2} \\&\qquad+ \|M(0)\|\Big(\int_0^T \|G(t)\|^2_{\g(H,X)} \, dt \Big)^{1/2}
\\ & \leq C \|G\|_{L^p(0,T;\g(H,X))}.
\end{align*}
\end{enumerate}
\end{remark}

From Theorem \ref{thm:productforward}, Proposition \ref{prop:embeddingtype} and Remark \ref{rem:suffweaklL2} we immediately derive the following:
\begin{corollary}\label{cor:productforward}
Assume $X$ and $Y$ are {\umd} Banach space with type $2$ and assume $M$ satisfies (i) and (ii) of Theorem \ref{thm:productforward}.
Assume $p>2$ and $\delta<\tfrac32 - \tfrac1p$.
If $G\in L^0(\OO;L^p(0,T;\g(H,X)))$ is adapted, then $M G$ is forward integrable, $s\mapsto M'(s) I(\one_{[s,t]}G)\in L^1(0,T;Y)$ almost surely, and \eqref{eq:forwardintbyparts} holds.
\end{corollary}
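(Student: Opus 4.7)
The plan is to deduce this corollary directly from Theorem \ref{thm:productforward} after verifying its hypotheses in the type $2$ setting. The only nontrivial verifications are (a) promoting the assumption $G\in L^0(\OO;L^p(0,T;\g(H,X)))$ to membership in $V^{\beta,p}(0,T;H,X)$ for a suitable $\beta$, and (b) showing that $MG$ is weakly $L^2(0,T;H)$. Both are isolated facts already established in the paper.

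First I would choose the exponent $\beta$. Since $\delta<\tfrac32-\tfrac1p$, the quantity $\tfrac1p+\delta-1$ is strictly less than $\tfrac12$, so the open interval
\[\Big(\max\!\big(\tfrac1p,\,\tfrac1p+\delta-1\big),\,\tfrac12\Big)\]
is nonempty; pick any $\beta$ in it. This $\beta$ satisfies $\beta\in(\tfrac1p,\tfrac12)$ and $\beta-\tfrac1p-\delta+1>0$, which are exactly the relations required by Theorem \ref{thm:productforward}.

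Next I would apply the type $2$ embedding of Proposition \ref{prop:embeddingtype}\eqref{it:cotypeemb}, which gives
\[L^p(0,T;\gamma(H,X))\hookrightarrow V^{\beta,p}(0,T;H,X).\]
Hence $G\in L^0(\OO;V^{\beta,p}(0,T;H,X))$ and $G$ remains adapted. For the weak $L^2$ condition, Remark \ref{rem:suffweaklL2}(2) applies verbatim: under $M$'s hypotheses (i) and (ii) and $0\le\delta<\tfrac32-\tfrac1p$, one has $MG\in L^0(\OO;L^2(0,T;\g(H,Y)))$, which in particular means that $(t,\omega)\mapsto M(t,\omega)G(t,\omega)$ is $H$-strongly measurable and weakly in $L^2(0,T;H)$ almost surely.

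With both hypotheses checked, Theorem \ref{thm:productforward} applies and delivers all three conclusions: that $MG$ is forward integrable, that $s\mapsto M'(s)I(\one_{[s,T]}G)$ lies in $L^1(0,T;Y)$ almost surely, and the integration by parts identity \eqref{eq:forwardintbyparts}. There is no genuine obstacle beyond parameter bookkeeping; the only point one must be careful about is verifying that the admissible range for $\beta$ is nonempty, which is precisely the role played by the quantitative assumption $\delta<\tfrac32-\tfrac1p$.
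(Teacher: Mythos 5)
Your proposal is correct and takes exactly the same route as the paper, which states that the corollary follows immediately from Theorem \ref{thm:productforward}, Proposition \ref{prop:embeddingtype} and Remark \ref{rem:suffweaklL2}. You have simply spelled out the only bookkeeping involved, namely choosing $\beta\in\big(\max(\tfrac1p,\tfrac1p+\delta-1),\tfrac12\big)$, invoking the type~$2$ embedding $L^p(0,T;\gamma(H,X))\hookrightarrow V^{\beta,p}(0,T;H,X)$, and using Remark \ref{rem:suffweaklL2}(2) for the weak $L^2$ condition; this is precisely what the paper intends.
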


As an illustration we present a brief indication how the results of this section can be applied to stochastic evolution equations.
\begin{example}
Assume that for each $\omega\in \OO$, $(A(t,\omega))_{t\in [0,T]}$ is a family of unbounded operators which generates an evolution family $(S(t,s,\omega))_{0\leq s\leq t\leq T, \omega\in\OO}$ on a Banach space $X_0$. Assume that $X_1 = D(A(t,\omega))$ does not depend on time and $\omega\in \OO$, and $A:[0,T]\times\OO\to\calL(X_1,X_0)$ is adapted.
In general, $\omega \mapsto S(t,s,\omega)$ will only be $\F_t$-measurable, and hence the stochastic convolution
\[\int_0^t S(t,s) G(s) \, d W(s)\] does not exist as an It\^o integral. In many situations one can check that $\frac{d}{ds}S(t,s) = - S(t,s) A(s)$ satisfies $\big\|\frac{d}{ds}S(t,s,\omega)\big\|\leq C(\omega) (t-s)^{-1}$ (see \cite{AT1} and \cite{Lunardi}). Therefore, Theorems \ref{thm:productforward} and Corollary \ref{cor:productforward} with $M(s) = S(t,s)$ can be used to obtain sufficient conditions for the existence of the forward convolution
\begin{equation}\label{eq:reprformFwdInt}
\begin{aligned}
U(t) & := \int_0^t S(t,s) G(s) \, d^{-}W(s)
\\ & = S(t,0)I(\one_{[0,t]}G) - \int_0^t S(t,s) A(s) I(\one_{[s,t]}G)  \;ds.
\end{aligned}
\end{equation}
In \cite{NualartLeon} Le{\'o}n and Nualart have observed that the forward integral gives a weak solution of the stochastic evolution equation
\[d U = A(t) U(t) \, dt + G(t)\, d W(t),  \ \ U(0)= 0,\]
and even more general equations.
Using \eqref{eq:reprformFwdInt} one can obtain a rather complete theory for non-autonomous stochastic evolution equations with random drift.
Details can be found in \cite{PV-representation}.
\end{example}

\def\polhk#1{\setbox0=\hbox{#1}{\ooalign{\hidewidth
  \lower1.5ex\hbox{`}\hidewidth\crcr\unhbox0}}}

\end{document}